\numberwithin{equation}{section}
\theoremstyle{plain}
\newtheorem{thm}{Theorem}[section]
\newtheorem{lem}[thm]{Lemma}
\theoremstyle{definition}
\newcommand{\R}{\mathbb{R}}
\newcommand{\Z}{\mathbb{Z}}
\newcommand{\calF}{\mathcal{F}}
\newcommand{\calQ}{\mathcal{Q}}
\newcommand{\calS}{\mathcal{S}}
\begin{document}

\title[Pseudo-differential operators with symbols
in $\alpha$-modulation spaces]
{On the $L^2$-boundedness of pseudo-differential operators
and their commutators with symbols in $\alpha$-modulation spaces}
\author{Masaharu Kobayashi \and Mitsuru Sugimoto \and Naohito Tomita}
\date{}

\address{Masaharu Kobayashi \\
Department of Mathematics \\
Tokyo University of Science \\
Kagurazaka 1-3, Shinjuku-ku, Tokyo 162-8601, Japan}
\email{kobayashi@jan.rikadai.jp }

\address{Mitsuru Sugimoto \\
Department of Mathematics \\
Graduate School of Science \\
Osaka University \\
Toyonaka, Osaka 560-0043, Japan}
\email{sugimoto@math.sci.osaka-u.ac.jp}

\address{Naohito Tomita \\
Department of Mathematics \\
Graduate School of Science \\
Osaka University \\
Toyonaka, Osaka 560-0043, Japan}
\email{tomita@gaia.math.wani.osaka-u.ac.jp}

\keywords{$\alpha$-modulation spaces, Besov spaces,
pseudo-differential operators}

\subjclass[2000]{42B35, 47G30}



\maketitle

\section{Introduction}\label{section1}
Since the theory of pseudo-differential operators was established
in 1970's, the $L^2$-boundedness of them with symbols in
the H\"ormander class $S^0_{\rho,\delta}$
has been well investigated by many authors.
Among them, Calder\'on-Vaillancourt \cite{Calderon-Vaillancourt}
first treated the boundedness for the class $S^0_{0,0}$, which means
that the boundedness of all the derivatives of symbols assures the
$L^2$-boundedness of the corresponding operators.
It should be mentioned that the boundedness of all the derivatives
of symbols is not necessary in their proof.
Being motivated by this argument, many authors
as Coifman-Meyer \cite{Coifman-Meyer-1}, Cordes \cite{Cordes},
Kato \cite{Kato}, Miyachi \cite{Miyachi},
Muramatu \cite{Muramatsu}, Nagase \cite{Nagase}
contributed to know the minimal assumption 
on the regularity of symbols for the
corresponding operators to be $L^2$-bounded.
They said that the boundedness of the derivatives of symbols up to 
a certain order, which exceeds $n/2$, assures the $L^2(\R^n)$-boundedness.
Especially, Sugimoto \cite{Sugimoto} showed
that symbols in the Besov space $B^{(\infty,\infty),(1,1)}_{(n/2,n/2)}$
implies the $L^2$-boundedness.
\par
In the last decade, new developments in this problem have appeared.
Sj\"ostrand \cite{Sjostrand} introduced a wider class 
than $S^0_{0,0}$ which assures the $L^2$-boundedness and is now
recognized as a special case of modulation spaces introduced
by Feichtinger \cite{Feichtinger-1,Feichtinger-2,Feichtinger-3}.
These spaces are based on the idea of quantum mechanics
or time-frequency analysis.
Sj\"ostrand class can be written as $M^{\infty,1}$ if we follow the
notation of modulation spaces.
Gr\"ochenig-Heil \cite{Grochenig-Heil} and Toft \cite{Toft}
gave some related results to
Sj\"ostrand's one by developing the theory of modulation spaces.
Boulkhemir \cite{Boulkhemair} treated the same discussion for
Fourier integral operators.
\par
We remark that
the relation between Besov and modulation spaces 
is well studied by the works of 
Gr\"obner \cite{Grobner}, Toft \cite{Toft}
and Sugimoto-Tomita \cite{Sugimoto-Tomita},
and we know that the spaces $B^{(\infty,\infty),(1,1)}_{(n/2,n/2)}$
and $M^{\infty,1}$ have no inclusion relation with each others
(see Appendix)
although the class $S^0_{0,0}$ is properly included in both spaces.
In this sense, the results of Sugimoto \cite{Sugimoto} and
Sj\"ostrand \cite{Sjostrand} are
independent extension of Calderon-Vaillancourt's result.
\par
The objective of this paper is to show that these two results,
which appeared to be independent ones, can be proved based on the same
principle.
Especially we give another proof to Sj\"ostrand's result following the
same argument used to prove Sugimoto's result.
For the purpose, we use the notation of
$\alpha$-modulation spaces $M^{p,q}_{s,\alpha}$ ($0\leq\alpha\leq1$), 
a parameterized family of function spaces, which includes Besov spaces
$B^{p,q}_s$ and modulation spaces $M^{p,q}$ as special cases corresponding to
$\alpha=1$ and $\alpha=0$. 
The $\alpha$-modulation spaces were introduced by Gr\"obner \cite{Grobner},
and developed by the works of 
Feichtinger-Gr\"obner \cite{Feichtinger-Grobner},
Borup-Nielsen \cite{B-N,B-N-2}
and Fornasier \cite{Fornasier}.
\par
The following is our main result:
\begin{thm}\label{1.1}
Let $0 \le \alpha \le 1$.
Then there exists a constant $C>0$ such that
\[
\|\sigma(X,D)f\|_{L^2}
\le C\|\sigma
\|_{M_{(\alpha n/2,\alpha n/2),(\alpha,\alpha)}^{(\infty,\infty),(1,1)}}
\|f\|_{L^2}
\]
for all
$\sigma \in M_{(\alpha n/2,\alpha n/2),
(\alpha,\alpha)}^{(\infty,\infty),(1,1)}(\R^n\times\R^n)$
and $f \in \calS(\R^n)$.
\end{thm}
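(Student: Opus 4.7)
The plan is to mimic, at the level of the $\alpha$-admissible covering defining the $\alpha$-modulation space, the Littlewood--Paley argument given by Sugimoto in the extreme case $\alpha=1$. Fix an $\alpha$-admissible covering $\{B_k\}$ of $\R^n$ by (essentially) balls with centres $\xi_k$ and radii $r_k\simeq\langle\xi_k\rangle^{\alpha}$ together with an adapted smooth partition of unity $\{\psi_k\}$. Because $\sigma(x,\xi)$ depends on two independent variables, we apply the partition in each of the two Fourier-dual variables $(\eta,y)$ of $\sigma$ to obtain a decomposition $\sigma=\sum_{k,l}\sigma_{k,l}$ with $\widehat{\sigma_{k,l}}$ supported in $B_k\times B_l\subset\R^n_\eta\times\R^n_y$. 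By the very definition of $M^{p,q}_{s,\alpha}$, the norm appearing in the theorem is then equivalent to
$$
\sum_{k,l}\langle\xi_k\rangle^{\alpha n/2}\,\langle\xi_l\rangle^{\alpha n/2}\,\|\sigma_{k,l}\|_{L^\infty}.
$$

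The crux of the proof is therefore a uniform single-piece estimate
$$
\|\sigma_{k,l}(X,D)\|_{L^2\to L^2}\;\le\;C\,\langle\xi_k\rangle^{\alpha n/2}\,\langle\xi_l\rangle^{\alpha n/2}\,\|\sigma_{k,l}\|_{L^\infty}
$$
with $C$ independent of $(k,l)$; once this is in hand, the triangle inequality and the norm identification above close the argument. To establish the single-piece bound, I would first remove the frequency centres by the unitary equivalences obtained from intertwining $\sigma_{k,l}(X,D)$ with modulation in $x$ on the output (shifting $\eta_k\mapsto 0$) and translation on the input (shifting $y_l\mapsto 0$), reducing to the model case in which $\widehat{\sigma_{k,l}}$ is supported in an $(r_k,r_l)$-box at the origin. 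For the model symbol, one then exploits the Gabor-type representation
$$
\sigma_{k,l}(X,D)f=(2\pi)^{-2n}\int \widehat{\sigma_{k,l}}(\eta,y)\,e^{ix\cdot\eta}\,f(x+y)\,d\eta\,dy
$$
together with a Calder\'on--Vaillancourt-style analysis (integration by parts in $\eta$ and $y$ to exploit the smoothness of $\sigma_{k,l}$ in $x$ and $\xi$ imposed by its two-sided frequency localization); the powers $r_k^{n/2}$ and $r_l^{n/2}$ should arise as the natural costs in the respective Cauchy--Schwarz/Bernstein steps.

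The main obstacle is precisely this single-piece estimate with the sharp $(r_k r_l)^{n/2}$ constant and, crucially, with $\|\cdot\|_{L^\infty}$ rather than $\|\cdot\|_{L^2}$ on the right-hand side: the naive route through Minkowski and Cauchy--Schwarz in the Gabor representation only yields $\|\sigma_{k,l}\|_{L^2}$, so a localization/integration-by-parts refinement is needed to trade the global $L^2$-norm for the $L^\infty$-norm while preserving the $(r_k r_l)^{n/2}$ factor. For $\alpha=0$ the radii $r_k$ are comparable to $1$ and one recovers Sj\"ostrand's pointwise bound, while for $\alpha=1$ the dyadic structure of the covering reproduces Sugimoto's Besov estimate; the content of the theorem is therefore to give a single argument treating the variable radius $r_k=\langle\xi_k\rangle^{\alpha}$ uniformly across $0\le\alpha\le 1$. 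A routine final item is to verify that the sum $\sum_{k,l}\sigma_{k,l}$ converges in a topology in which $\sigma\mapsto\sigma(X,D)$ is continuous on $\calS(\R^n)$, which is standard for $\alpha$-modulation symbols.
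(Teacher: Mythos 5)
Your reduction is the same as the paper's: decompose $\sigma=\sum_{Q,Q'}\psi_Q(D_x)\psi_{Q'}(D_\xi)\sigma$ with a BAPU in each variable and aim at the single-piece bound $\|\sigma_{Q,Q'}(X,D)\|_{L^2\to L^2}\le C\,|Q|^{1/2}|Q'|^{1/2}\|\sigma_{Q,Q'}\|_{L^\infty}$, after which the triangle inequality and $|Q|\asymp\langle x_Q\rangle^{\alpha n}$, $|Q'|\asymp\langle\xi_{Q'}\rangle^{\alpha n}$ finish the proof. But that single-piece estimate is the entire analytic content of the theorem, and you do not prove it; you explicitly name it as ``the main obstacle'' and leave it open. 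Moreover, the two mechanisms you gesture at do not produce it. The Gabor-type representation plus Cauchy--Schwarz gives $\|\widehat{\sigma_{Q,Q'}}\|_{L^1}$ or, after Plancherel, $\|\sigma_{Q,Q'}\|_{L^2(\R^{2n})}$ on the right, not $\|\sigma_{Q,Q'}\|_{L^\infty}$ -- as you yourself note. And the route ``recenter, use Bernstein to control derivatives, then apply Calder\'on--Vaillancourt'' loses the sharp exponent: after recentering, each $x$-derivative costs a factor $\asymp R_Q$ and each $\xi$-derivative a factor $\asymp R_{Q'}$, and since Calder\'on--Vaillancourt requires strictly more than $n/2$ derivatives in each variable, this yields $(R_QR_{Q'})^{N}\|\sigma_{Q,Q'}\|_{L^\infty}$ with $N>n/2$, which is not summable against the weights $\langle x_Q\rangle^{\alpha n/2}\langle\xi_{Q'}\rangle^{\alpha n/2}$. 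So the proposal reduces the theorem to an unproved claim and proposes tools that fall short of it.

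For comparison, the paper obtains exactly this single-piece bound, with the exponent $1/2$ on each of $|Q|$ and $|Q'|$ and with the $L^\infty$ norm on the right, by a different device. Lemma \ref{3.1} supplies $\varphi,\chi$ with $\int\varphi\chi=1$, $\mathrm{supp}\,\varphi$ and $\mathrm{supp}\,\widehat\chi$ in unit balls; inserting $1=\int(\varphi\chi)(\xi-\tau)\,d\tau$ rewrites $\sigma(X,D)f(x)=\int e^{ix\cdot\tau}\,\sigma_\tau(X,D)\chi(D)f_\tau(x)\,d\tau$ with $\sigma_\tau(x,\xi)=\sigma(x,\xi+\tau)$ and $f_\tau=\calF^{-1}[\varphi\,\widehat f(\cdot+\tau)]$. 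Then two separate almost-orthogonality lemmas are applied: Lemma \ref{3.2} uses the $x$-frequency support in $Q+B(0,1)$ to bound the $\tau$-integral by $C|Q|^{1/2}\|g_{Q,Q'}\|_{L^2(dx\,d\tau)}$, and Lemma \ref{3.3} uses the $\xi$-frequency support in $Q'+B(0,1)$ to produce $C|Q'|^{1/2}\sup_x\|\sigma_{\tau,Q,Q'}(x,\cdot)\chi(\cdot)\|_{L^2}$, where the factor $\chi\in L^2$ is precisely what converts the $L^2_\xi$ norm into $\|\sigma_{Q,Q'}\|_{L^\infty}$; finally $\int\|f_\tau\|_{L^2}^2\,d\tau=\|\varphi\|_{L^2}^2\|f\|_{L^2}^2$ recovers $\|f\|_{L^2}$. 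Some version of this continuous-decomposition (or an equivalent Cotlar--Stein) argument is indispensable; without it your outline cannot be completed as written. (The approximation of $\sigma$ by Schwartz symbols, Lemma \ref{3.4}, is the routine step you correctly flag at the end.)
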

The exact definition of the product $\alpha$-modulation space
$M_{(s_1,s_2),(\alpha,\alpha)}^{(\infty,\infty),(1,1)}$
will be given in Section \ref{section2},
and the proof will be given in Section \ref{section3}.
Theorem \ref{1.1} with $\alpha=1$ is the result of Sugimoto \cite{Sugimoto}
while $\alpha=0$ Sj\"ostrand \cite{Sjostrand}.
\par
As an important application of Theorem \ref{1.1},
we can discuss the $L^2$-boundedness of the commutator $[T,a]$
of the operator $T$ and a Lipschitz function $a(x)$.
Calder\'on \cite{Calderon} considered this problem when $T$ is
a singular integral operator of convolution type, and
Coifman-Meyer \cite{Coifman-Meyer-2} extended this argument to the case
when $T$ is a pseudo-differential operator
with the symbol in the class $S^1_{1,0}$.
Furthermore, Marschall \cite{Marschall} showed the $L^2$-boundedness of
this commutator
when the symbol is of the class $S^m_{\rho,\delta}$ with $m=\rho$,
especially the class $S^0_{0,0}$.
On account of Theorem \ref{1.1}, it is natural to expect the same 
boundedness for symbols in Besov and modulation spaces.
In fact we have the following theorem:
\begin{thm}\label{1.2}
Let $0 \le \alpha \le 1$.
Then there exists a constant $C>0$ such that
\[
\|[\sigma(X,D),a]f\|_{L^2}
\le C\|\nabla a\|_{L^{\infty}}
\|\sigma
\|_{M_{(\alpha n/2,\alpha n+1),(\alpha,\alpha)}^{(\infty,\infty),(1,1)}}
\|f\|_{L^2}
\]
for all Lipschitz functions $a$,
$\sigma \in M_{(\alpha n/2,\alpha n+1),
(\alpha,\alpha)}^{(\infty,\infty),(1,1)}(\R^n\times\R^n)$
and $f \in \calS(\R^n)$.
\end{thm}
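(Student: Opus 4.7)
The plan is to reduce Theorem~\ref{1.2} to Theorem~\ref{1.1} by realizing the commutator as a single pseudo-differential operator. A direct Fourier computation using $\widehat{af}=(2\pi)^{-n/2}\hat a*\hat f$ gives $[\sigma(X,D),a]f=\tau(X,D)f$ with
\[
\tau(x,\xi)=(2\pi)^{-n/2}\int e^{ix\eta}\hat a(\eta)\bigl[\sigma(x,\xi+\eta)-\sigma(x,\xi)\bigr]d\eta,
\]
which on the partial Fourier side in $\xi$ takes the clean form
\[
\mathcal{F}_\xi\tau(x,y)=\bigl(a(x+y)-a(x)\bigr)\mathcal{F}_\xi\sigma(x,y).
\]
Writing $a(x+y)-a(x)=\sum_j y_j B_j(x,y)$ with $B_j(x,y):=\int_0^1(\partial_j a)(x+ty)\,dt$ (so $|B_j|\le\|\nabla a\|_{L^\infty}$) and invoking the identity $y_j\mathcal{F}_\xi\sigma(x,y)=-i\mathcal{F}_\xi(\partial_{\xi_j}\sigma)(x,y)$ turns this into
\[
\tau(x,\xi)=-i\sum_{j=1}^n\mathcal{F}_\xi^{-1}\bigl[B_j(x,y)\,\mathcal{F}_\xi(\partial_{\xi_j}\sigma)(x,y)\bigr](\xi),
\]
in which $\|\nabla a\|_{L^\infty}$ appears explicitly as the pointwise bound on the multipliers $B_j$.

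Applying Theorem~\ref{1.1} to $\tau$ then reduces Theorem~\ref{1.2} to the symbol-level estimate
\[
\|\tau\|_{M^{(\infty,\infty),(1,1)}_{(\alpha n/2,\alpha n/2),(\alpha,\alpha)}}\le C\|\nabla a\|_{L^\infty}\|\sigma\|_{M^{(\infty,\infty),(1,1)}_{(\alpha n/2,\alpha n+1),(\alpha,\alpha)}}.
\]
For this I would decompose $\sigma=\sum_{k,l}\sigma_{k,l}$ via the $\alpha$-modulation partition of unity on $\R^n\times\R^n$, with $\hat\sigma_{k,l}$ supported in a box of size $\sim\langle X_k\rangle^\alpha\times\langle\Xi_l\rangle^\alpha$ around $(X_k,\Xi_l)$. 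Bernstein's inequality gives $\|\partial_{\xi_j}\sigma_{k,l}\|_{L^\infty}\le C\langle\Xi_l\rangle\|\sigma_{k,l}\|_{L^\infty}$, which accounts for the ``$+1$'' in the second weight; the remaining $\alpha n/2$ extra weight pays for almost-orthogonality when the partial-Fourier-side multiplication by $B_j(x,y)$ redistributes each block over its neighbours.

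The main obstacle is precisely this last step. The operators $\mathcal{F}_\xi^{-1}M_{B_j(x,\cdot)}\mathcal{F}_\xi$ do not preserve the $\alpha$-modulation block decomposition, and $B_j$ is merely bounded (not smooth, and dependent on $x$), so the resulting cross-block spreading must be controlled by hand. The key point is to show that this spread is $\ell^1$-summable against the weight $\langle X_k\rangle^{\alpha n/2}\langle\Xi_l\rangle^{\alpha n+1}$, exploiting the support structure of $\hat\sigma_{k,l}$ together with the uniform bound $|B_j|\le\|\nabla a\|_{L^\infty}$ (and doing so uniformly in the auxiliary parameter $t\in[0,1]$). This argument is designed to parallel the almost-orthogonality step in the proof of Theorem~\ref{1.1}.
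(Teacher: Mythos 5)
Your reduction of the commutator to a single symbol $\tau$ with $\mathcal{F}_\xi\tau(x,y)=(a(x+y)-a(x))\,\mathcal{F}_\xi\sigma(x,y)$, the factorization $a(x+y)-a(x)=\sum_j y_jB_j(x,y)$, and the observation that the factor $y_j$ converts into $\partial_{\xi_j}$ and hence (via Bernstein and Lemma \ref{4.3}) into the weight $\langle\xi_{Q'}\rangle$, all match mechanisms that do appear in the paper. But the step you flag as ``the main obstacle'' is not a technicality to be controlled by hand; it is the entire difficulty, and in the form you pose it the estimate is in doubt. Multiplication by $B_j(x,y)$ on the $\mathcal{F}_\xi$ side in fact preserves the support $\mathrm{supp}\,\mathcal{F}_\xi\sigma_{Q,Q'}(x,\cdot)\subset Q'$, so there is no spreading in the second variable to sum; the real problems are elsewhere: (i) $\mathcal{F}_\xi^{-1}M_{B_j(x,\cdot)}\mathcal{F}_\xi$ is a Fourier multiplier with a merely bounded, non-smooth symbol, and such multipliers are not bounded on $L^\infty_\xi$ even for functions with Fourier support in a fixed block, so the needed bound $\|\tau_{Q,Q'}\|_{L^\infty}\le C\|\nabla a\|_{L^\infty}\|\partial_{\xi_j}\sigma_{Q,Q'}\|_{L^\infty}$ is not available; and (ii) $B_j$ depends on $x$ with no frequency localization whatsoever, so the first-variable blocks $\psi_Q(D_x)$ are completely destroyed and the sum over $\widetilde Q$ in the $M^{(\infty,\infty),(1,1)}_{(\alpha n/2,\alpha n/2),(\alpha,\alpha)}$ norm of $\tau$ has no reason to converge. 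The weight surplus $\langle\xi_{Q'}\rangle^{\alpha n/2}$ you reserve for ``almost orthogonality'' does not address either issue.

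The paper circumvents this by Littlewood--Paley decomposing $a$ itself, $a=\sum_{j\ge0}\varphi_j(D)a$, before doing anything else. For $j\ge1$ no commutator structure is used at all: $\|\varphi_j(D)a\|_{L^\infty}\le C2^{-j}\|\nabla a\|_{L^\infty}$ and Theorem \ref{1.1} give a convergent geometric series. Only the low-frequency piece $\varphi_0(D)a$ enters the Taylor-expansion argument, and there the cut-off $\varphi_0(\eta)$ confines the dual variable $\eta$ of $a$ to a compact set; this is what makes the auxiliary symbol $\tau^{k,t,\xi}_{Q,Q'}(x,\eta)=\partial_{\xi_k}\sigma_{Q,Q'}(x,\xi+t\eta)\,\chi(\eta)$ lie in $L^1\cap L^2$ in $\eta$, so that the pointwise bound of Lemma \ref{4.2} (with $\|\chi\|_{L^2}$, $\|\Phi\|_{L^1}$ and $|Q'|^{1/2}\asymp\langle\xi_{Q'}\rangle^{\alpha n/2}$) closes the estimate, with only a finite, controlled spread of blocks by Lemma \ref{2.1}. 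If you want to salvage your route, you would need to insert this frequency splitting of $a$ (or an equivalent device) before multiplying by $B_j$; as written, the proposal leaves the decisive estimate unproven. You also omit the approximation step (Lemmas \ref{3.4} and \ref{4.1}) needed to justify the formal manipulations for general Lipschitz $a$ and non-smooth $\sigma$, but that is minor by comparison.
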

Theorem \ref{1.2} with $\alpha=1$, which requires
$\sigma\in B_{(n/2, n+1)}^{(\infty,\infty),(1,1)}$,
is an extension of the result by
Marschall \cite{Marschall} which treated the case
$\sigma\in B_{(r, N)}^{(\infty,\infty),(\infty,\infty)}$
with $r>n/2$ and $N>n+1$.
Theorem \ref{1.2} with $\alpha=0$ is a result of new type in this problem.
The proof of Theorem \ref{1.2} will be give in Section \ref{section4}.
\section{Preliminaries}\label{section2}
Let $\calS(\R^n)$ and $\calS'(\R^n)$ be the Schwartz spaces of
all rapidly decreasing smooth functions
and tempered distributions,
respectively.
We define the Fourier transform $\calF f$
and the inverse Fourier transform $\calF^{-1}f$
of $f \in \calS(\R^n)$ by
\[
\calF f(\xi)
=\widehat{f}(\xi)
=\int_{\R^n}e^{-i\xi \cdot x}\, f(x)\, dx
\quad \text{and} \quad
\calF^{-1}f(x)
=\frac{1}{(2\pi)^n}
\int_{\R^n}e^{ix\cdot \xi}\, f(\xi)\, d\xi.
\]
Let $\sigma(x,\xi) \in \calS(\R^n\times\R^n)$.
We denote by $\calF_1\sigma(y,\xi)$ and $\calF_2\sigma(x,\eta)$
the partial Fourier transform of $\sigma$ in the first variable
and in the second variable,
respectively.
That is, $\calF_1\sigma(y,\xi)=\calF[\sigma(\cdot,\xi)](y)$
and $\calF_2\sigma(x,\eta)=\calF[\sigma(x,\cdot)](\eta)$.
We also denote by $\calF_1^{-1}\sigma$ and $\calF_2^{-1}\sigma$
the partial inverse Fourier transform of
$\sigma$ in the first variable
and in the second variable,
respectively.
We write $\calF_{1,2}=\calF_1\calF_2$
and $\calF_{1,2}^{-1}=\calF_1^{-1}\calF_2^{-1}$,
and note that $\calF_{1,2}$ and $\calF_{1,2}^{-1}$
are the usual Fourier transform 
and inverse Fourier transform
of functions on $\R^n\times\R^n$.
\par
We introduce the $\alpha$-modulation spaces
based on Borup-Nielsen \cite{B-N,B-N-2}.
Let $B(\xi,r)$ be the ball with center $\xi$ and radius $r$,
where $\xi \in \R^n$ and $r>0$.
A countable set $\calQ$ of subsets $Q \subset \R^n$
is called an admissible covering
if $\R^n=\cup_{Q \in \calQ}Q$
and there exists a constant $n_0$ such that
$\sharp \{Q' \in \calQ : Q \cap Q' \neq \emptyset\} \le n_0$
for all $Q \in \calQ$.
We denote by $|Q|$ the Lebesgue measure of $Q$,
and set $\langle \xi \rangle=(1+|\xi|^2)^{1/2}$,
where $\xi \in \R^n$.
Let $0 \le \alpha \le 1$,
\begin{equation}\label{(2.1)}
\begin{split}
&r_Q=\sup\{r>0 :
B(c_r,r) \subset Q \quad \text{for some $c_r \in \R^n$}\},
\\
&R_Q=\inf\{R>0 :
Q \subset B(c_R,R) \quad \text{for some $c_R \in \R^n$}\}.
\end{split}
\end{equation}
We say that an admissible covering $\calQ$
is an $\alpha$-covering of $\R^n$
if $|Q| \asymp \langle \xi \rangle^{\alpha n}$
(uniformly)
for all $\xi \in Q$ and $Q \in \calQ$,
and there exists a constant $K \ge 1$
such that $R_Q/r_Q \le K$ for all $Q \in \calQ$,
where
$\lq\lq|Q| \asymp \langle \xi \rangle^{\alpha n}$
(uniformly)
for all $\xi \in Q$ and $Q \in \calQ$"
means that
there exists a constant $C>0$ such that
\[
C^{-1}\langle \xi \rangle^{\alpha n}
\le |Q| \le
C \langle \xi \rangle^{\alpha n}
\qquad \text{for all $\xi \in Q$ and $Q \in \calQ$}.
\]
Let $r_Q$ and $R_Q$ be as in \eqref{(2.1)}.
We note that
\begin{equation}\label{(2.2)}
B(c_Q,r_Q/2) \subset Q \subset B(d_Q,2R_Q)
\qquad \text{for some $c_Q,d_Q \in \R^n$},
\end{equation}
and
there exists a constant $\kappa>0$ such that
\begin{equation}\label{(2.3)}
|Q| \ge \kappa
\qquad \text{for all $Q \in \calQ$}
\end{equation}
since $|Q| \asymp \langle \xi_Q \rangle^{\alpha n} \ge 1$,
where $\xi_Q \in Q$.
By \eqref{(2.1)},
we see that
$s_n r_Q^n \le |Q| \le s_n R_Q^n$,
where $s_n$ is the volume of the unit ball in $\R^n$.
This implies
\[
s_n \le \frac{|Q|}{r_Q^n}
=\frac{R_Q^n}{r_Q^n}\, \frac{|Q|}{R_Q^n}
\le K^n\, \frac{|Q|}{R_Q^n}
\le K^n\, s_n,
\]
that is,
\begin{equation}\label{(2.4)}
|Q| \asymp r_Q^n \asymp R_Q^n
\qquad \text{for all $Q \in \calQ$}
\end{equation}
(see \cite[Appendix B]{B-N}).
We frequently use the fact
\begin{equation}\label{(2.5)}
\langle \xi_Q \rangle
\asymp \langle \xi_Q' \rangle
\qquad \text{for all $\xi_Q,\xi_Q' \in Q$ and $Q \in \calQ$}.
\end{equation}
If $\alpha \neq 0$,
then \eqref{(2.5)} follows directly
from the definition of $\alpha$-covering
$|Q| \asymp \langle \xi_Q \rangle^{\alpha n}$.
By \eqref{(2.4)},
if $\alpha=0$ then
$R_Q^n \asymp |Q| \asymp \langle \xi_Q \rangle ^{\alpha n}=1$,
and consequently
there exists $R>0$ such that $R_Q \le R$ for all $Q \in \calQ$.
Hence,
by \eqref{(2.2)},
we have
$Q \subset B(d_Q, 2R)$ for some $d_Q \in \R^n$.
This implies that $\eqref{(2.5)}$ is true even if $\alpha=0$.
\par
Given an $\alpha$-covering $\calQ$ of $\R^n$,
we say that $\{\psi_Q\}_{Q \in \calQ}$
is a corresponding bounded
admissible partition of unity (BAPU) if
$\{\psi_Q\}_{Q \in \calQ}$ satisfies
\begin{enumerate}
\item
$\mathrm{supp}\, \psi_Q \subset Q$,
\item
$\sum_{Q \in \calQ}\psi_Q(\xi)=1$ for all $\xi \in \R^n$,
\item
$\sup_{Q \in \calQ}\|\calF^{-1}\psi_Q\|_{L^1}<\infty$.
\end{enumerate}
We remark that an $\alpha$-covering $\calQ$ of $\R^n$
with a corresponding BAPU
$\{\psi_Q\}_{Q \in \calQ} \subset \calS(\R^n)$
actually exists for every $0\le \alpha \le 1$
(\cite[Proposition A.1]{B-N}).
Let $1 \le p,q \le \infty$, $s \in \R$,
$0 \le \alpha \le 1$
and $\calQ$ be an $\alpha$-covering of $\R^n$
with a corresponding BAPU
$\{\psi_Q\}_{Q \in \calQ} \subset \calS(\R^n)$.
Fix a sequence $\{\xi_Q\}_{Q \in \calQ} \subset \R^n$
satisfying $\xi_Q \in Q$ for every $Q \in \calQ$.
Then the $\alpha$-modulation space $M_{s,\alpha}^{p,q}(\R^n)$
consists of all $f \in \calS'(\R^n)$ such that
\[
\|f\|_{M_{s,\alpha}^{p,q}}
=\left(\sum_{Q \in \calQ}
\langle \xi_Q \rangle^{sq}
\|\psi_Q(D)f\|_{L^p}^q \right)^{1/q}<\infty,
\]
where
$\psi(D)f=\calF^{-1}[\psi\, \widehat{f}]=(\calF^{-1}\psi)*f$.
We remark that
the definition of $M_{s,\alpha}^{p,q}$
is independent of the choice
of the $\alpha$-covering $\calQ$,
BAPU $\{\psi_Q\}_{Q \in \calQ}$
and sequence $\{\xi_Q\}_{Q \in \calQ}$
(see \cite[Section 2]{B-N-2}).
Let $\psi \in \calS(\R^n)$ be such that
\begin{equation}\label{(2.6)}
\mathrm{supp}\, \psi \subset [-1,1]^n,
\qquad
\sum_{k \in \Z^n}\psi(\xi-k)=1
\quad
\text{for all $\xi \in \R^n$}.
\end{equation}
If $\alpha=0$
then the $\alpha$-modulation space $M_{s,\alpha}^{p,q}(\R^n)$
coincides with the modulation space $M_s^{p,q}(\R^n)$,
that is, $\|f\|_{M_{s,\alpha}^{p,q}} \asymp \|f\|_{M_s^{p,q}}$,
where
\[
\|f\|_{M_s^{p,q}}
=\left(\sum_{k \in \Z^n}\langle k \rangle^{sq}
\|\psi(D-k)f\|_{L^p}^q \right)^{1/q}.
\]
If $s=0$ we write $M^{p,q}(\R^n)$ instead of $M_0^{p,q}(\R^n)$.
Let $\varphi_0,\varphi \in \calS(\R^n)$ be such that
\begin{equation}\label{(2.7)}
\mathrm{supp}\, \varphi_0 \subset \{|\xi|\le 2\},
\quad
\mathrm{supp}\, \varphi \subset \{1/2 \le |\xi| \le 2\},
\quad
\varphi_0(\xi)+\sum_{j=1}^{\infty}\varphi(2^{-j}\xi)=1
\end{equation}
for all $\xi \in \R^n$.
Set $\varphi_j(\xi)=\varphi(\xi/2^j)$ if $j \ge 1$.
On the other hand, if $\alpha=1$ then
the $\alpha$-modulation space $M_{s,\alpha}^{p,q}(\R^n)$
coincides with the Besov space $B_s^{p,q}(\R^n)$,
that is, $\|f\|_{M_{s,\alpha}^{p,q}} \asymp \|f\|_{B_s^{p,q}}$,
where
\[
\|f\|_{B_s^{p,q}}
=\left(\sum_{j=0}^{\infty}2^{jsq}
\|\varphi_j(D)f\|_{L^p}^q \right)^{1/q}.
\]
We remark that we can actually check that
the $\alpha$-covering $\calQ$ with the corresponding BAPU
$\{\psi_Q\}_{Q \in \calQ} \subset \calS(\R^n)$
given in \cite[Proposition A.1]{B-N}
(see Lemma \ref{4.3})
satisfies
\begin{equation}\label{(2.8)}
\sum_{Q \in \calQ}
\psi_Q(D)f=f
\quad \text{in} \quad \calS'(\R^n)
\qquad \text{for all $f \in \calS'(\R^n)$}
\end{equation}
and
\begin{equation}\label{(2.9)}
\sum_{Q,Q' \in \calQ}
\psi_Q(D_x)\psi_{Q'}(D_\xi)\sigma(x,\xi)=\sigma(x,\xi)
\quad \text{in} \quad \calS'(\R^n \times \R^n)
\end{equation}
for all $\sigma \in \calS'(\R^n \times \R^n)$,
where $0\le \alpha<1$,
\[
\psi_Q(D_x)\psi_{Q'}(D_\xi)\sigma
=\calF_{1,2}^{-1}[(\psi_Q \otimes\psi_{Q'})\, \calF_{1,2}\sigma]
=[(\calF^{-1}\psi_Q)\otimes(\calF^{-1}\psi_{Q'})]*\sigma
\]
and $\psi_Q \otimes\psi_{Q'}(x,\xi)=\psi_Q(x)\, \psi_{Q'}(\xi)$.
In the case $\alpha=1$,
\eqref{(2.8)} and {(2.9)} are well known facts,
since we can take $\{\varphi_j\}_{j \ge 0}$
as a BAPU corresponding to the $\alpha$-covering
$\{\{|\xi| \le 2\}, \{\{2^{j-1}\le |\xi| \le 2^{j+1}\}\}_{j\ge 1}\}$,
where $\{\varphi_j\}_{j \ge 0}$ is as in \eqref{(2.7)}.
In the rest of this paper, we assume that
an $\alpha$-covering $\calQ$ with a corresponding BAPU
$\{\psi_Q\}_{Q \in \calQ} \subset \calS(\R^n)$ always satisfies
\eqref{(2.8)} and \eqref{(2.9)}.
\par
We introduce the product $\alpha$-modulation space
$M_{(s_1,s_2),(\alpha,\alpha)}^{(\infty,\infty),(1,1)}
(\R^n\times\R^n)$ as a symbol class
of pseudo-differential operators.
Let $s_1,s_2 \in \R$, $0 \le \alpha \le 1$
and $\calQ$ be an $\alpha$-covering of $\R^n$
with a corresponding BAPU
$\{\psi_Q\}_{Q \in \calQ} \subset \calS(\R^n)$.
Fix two sequences
$\{x_Q\}_{Q \in \calQ}, \{\xi_{Q'}\}_{Q' \in \calQ} \subset \R^n$
satisfying $x_Q \in Q$ and $\xi_{Q'} \in Q'$
for every $Q,Q' \in \calQ$.
Then the product $\alpha$-modulation space
$M_{(s_1,s_2),(\alpha,\alpha)}^{(\infty,\infty),(1,1)}
(\R^n\times\R^n)$ consists of
all $\sigma \in \calS'(\R^n\times\R^n)$ such that
\[
\|\sigma\|_{M_{(s_1,s_2),(\alpha,\alpha)}^{(\infty,\infty),(1,1)}}
=\sum_{Q \in \calQ}\sum_{Q' \in \calQ}
\langle x_Q \rangle^{s_1}\langle \xi_{Q'} \rangle^{s_2}
\|\psi_Q(D_x)\psi_{Q'}(D_\xi)\sigma\|_{L^\infty(\R^n\times\R^n)}
<\infty.
\]
We note that
$M_{(0,0),(0,0)}^{(\infty,\infty),(1,1)}
(\R^n\times\R^n)=M^{\infty,1}(\R^{2n})$,
since we can take $\{\psi(\cdot-k)\}_{k \in \Z^n}$
as a BAPU corresponding to the $\alpha$-covering
$\{k+[-1,1]^n\}_{k \in \Z^n}$,
and $\psi\otimes\psi$ satisfies \eqref{(2.6)}
with $2n$ instead of $n$,
where $\alpha=0$ and $\psi \in \calS(\R^n)$
is as in \eqref{(2.6)}.
Similarly,
$M_{(s_1,s_2),(1,1)}^{(\infty,\infty),(1,1)}
(\R^n\times\R^n)=B_{(s_1,s_2)}^{(\infty,\infty),(1,1)}
(\R^n\times\R^n)$, where
\[
\|\sigma\|_{B_{(s_1,s_2)}^{(\infty,\infty),(1,1)}}
=\sum_{j=0}^{\infty}
\sum_{k=0}^{\infty}
2^{js_1+ks_2}
\|\varphi_j(D_x)\varphi_k(D_{\xi})\sigma\|_{L^{\infty}(\R^n\times\R^n)}
\]
and 
$\{\varphi_j\}_{j \ge 0},\{\varphi_k\}_{k \ge 0}$
are as in \eqref{(2.7)}
(see Sugimoto \cite[p.116]{Sugimoto}).
\par
We shall end this section by showing
the following basic properties of an $\alpha$-covering:
\begin{lem}\label{2.1}
Let $\calQ$ be an $\alpha$-covering of $\R^n$ and $R>0$.
Then the following are true:
\begin{enumerate}
\item
If $(Q+B(0,R))\cap Q' \neq \emptyset$,
then there exists a constant $\kappa>0$ such that
\[
\kappa^{-1} \langle \xi_Q \rangle \le
\langle \xi_{Q,Q'} \rangle
\le \kappa \langle \xi_{Q} \rangle
\quad \text{and} \quad
\kappa^{-1} \langle \xi_{Q'} \rangle \le
\langle \xi_{Q,Q'} \rangle
\le \kappa \langle \xi_{Q'} \rangle
\]
for all $\xi_Q \in Q$, $\xi_{Q'} \in Q'$ and
$\xi_{Q,Q'} \in (Q+B(0,R))\cap Q'$,
where $\kappa$ is independent of $Q,Q'$.
In particular, $\langle \xi_Q \rangle \asymp \langle \xi_{Q'} \rangle$.
\item
There exists a constant $n_0'$ such that
\[
\sharp \{Q' \in \calQ: (Q+B(0,R))\cap Q' \neq \emptyset\} \le n_0'
\qquad \text{for all $Q \in \calQ$}.
\]
\end{enumerate}
\end{lem}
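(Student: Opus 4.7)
The plan is to prove the two parts separately, relying on the basic properties \eqref{(2.2)}--\eqref{(2.5)} of an $\alpha$-covering that have already been established.

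For part (1), the idea is that adding a bounded set to $Q$ can only change the size of elements by a bounded factor. More precisely, let $\xi_{Q,Q'}\in(Q+B(0,R))\cap Q'$, and write $\xi_{Q,Q'}=\eta+y$ with $\eta\in Q$ and $|y|\le R$. Then the elementary estimates
\[
\langle\xi_{Q,Q'}\rangle\le 1+|\eta|+R\le (1+R)\langle\eta\rangle
\quad\text{and}\quad
\langle\eta\rangle\le (1+R)\langle\xi_{Q,Q'}\rangle
\]
give $\langle\xi_{Q,Q'}\rangle\asymp\langle\eta\rangle$ with a constant depending only on $R$. Since $\eta\in Q$ and $\xi_{Q,Q'}\in Q'$, applying \eqref{(2.5)} inside $Q$ and $Q'$ separately yields $\langle\xi_{Q,Q'}\rangle\asymp\langle\xi_Q\rangle$ and $\langle\xi_{Q,Q'}\rangle\asymp\langle\xi_{Q'}\rangle$ for any choice of $\xi_Q\in Q$, $\xi_{Q'}\in Q'$. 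The ``in particular'' part then follows by transitivity.

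For part (2), the strategy is a standard volume/packing argument. By \eqref{(2.2)} we have $Q\subset B(d_Q,2R_Q)$, hence $Q+B(0,R)\subset B(d_Q,2R_Q+R)$. If $Q'$ is any cube in $\calQ$ with $Q'\cap(Q+B(0,R))\ne\emptyset$, then using $Q'\subset B(d_{Q'},2R_{Q'})$ and part (1) together with \eqref{(2.4)} and $|Q'|\asymp\langle\xi_{Q'}\rangle^{\alpha n}\asymp\langle\xi_Q\rangle^{\alpha n}\asymp|Q|$, we obtain $R_{Q'}\asymp R_Q$ uniformly. Thus every such $Q'$ is contained in a common ball $B(d_Q,C(R_Q+R))$ whose radius depends only on $R_Q$ and $R$. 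Since the admissible covering has bounded overlap constant $n_0$, integrating $\sum_{Q'\in S}\chi_{Q'}\le n_0$ over this big ball gives
\[
\sharp S\cdot\min_{Q'\in S}|Q'|\le\sum_{Q'\in S}|Q'|\le n_0\,|B(d_Q,C(R_Q+R))|,
\]
where $S=\{Q'\in\calQ:(Q+B(0,R))\cap Q'\ne\emptyset\}$. Using $|Q'|\asymp R_Q^n$ I conclude $\sharp S\lesssim(1+R/R_Q)^n$. Finally, by \eqref{(2.3)} and \eqref{(2.4)} there is a uniform lower bound $R_Q\ge c>0$, so $\sharp S$ is bounded by a constant $n_0'=n_0'(R,n)$ independent of $Q$.

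The main obstacle is ensuring that all implicit constants in part (2) are genuinely uniform in $Q$. This is where the dichotomy between $\alpha=0$ and $\alpha>0$ could cause trouble: for $\alpha=0$ the $R_Q$ are bounded above, while for $\alpha>0$ they grow like $\langle\xi_Q\rangle^\alpha$. The uniform lower bound $R_Q\ge c$ furnished by \eqref{(2.3)}--\eqref{(2.4)} is what makes the single estimate $(1+R/R_Q)^n\le(1+R/c)^n$ work in both regimes simultaneously, so I would be careful to invoke those inequalities explicitly at the end.
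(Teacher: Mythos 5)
Your proof is correct and follows essentially the same route as the paper: part (1) is the identical translation-plus-\eqref{(2.5)} argument, and part (2) is the same volume/packing argument based on $R_{Q'}\asymp R_Q$, containment of all relevant $Q'$ in a ball of radius comparable to $R_Q$, and the uniform lower bound $R_Q\ge c$ from \eqref{(2.3)}--\eqref{(2.4)}. The only (harmless) difference is in the bookkeeping of bounded overlap: you integrate the pointwise bound $\sum_{Q'}\chi_{Q'}\le n_0$ over the big ball, whereas the paper splits $\calQ$ into $n_0$ pairwise disjoint subfamilies via \cite[Lemma B.1]{B-N} and sums volumes over each subfamily.
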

\begin{proof}
Assume that $(Q+B(0,R))\cap Q' \neq \emptyset$,
where $Q,Q' \in \calQ$.
\par
We consider the first part.
Let $\xi_{Q,Q'} \in (Q+B(0,R))\cap Q'$.
Since $\xi_{Q,Q'}=\widetilde{\xi_Q}+\xi$
for some $\widetilde{\xi_Q} \in Q$ and $\xi \in B(0,R)$,
we see that
$\langle \xi_{Q,Q'} \rangle \asymp \langle \widetilde{\xi_{Q}} \rangle$.
Hence,
by \eqref{(2.5)},
$\langle \xi_{Q} \rangle
\asymp \langle \widetilde{\xi_{Q}} \rangle
\asymp \langle \xi_{Q,Q'} \rangle$.
Similarly,
$\langle \xi_{Q'} \rangle \asymp \langle \xi_{Q,Q'} \rangle$.
\par
We next consider the second part.
It follows from the first part that
$|Q| \asymp \langle \xi_Q \rangle^{\alpha n}
\asymp \langle \xi_{Q'} \rangle^{\alpha n} \asymp |Q'|$,
and consequently
\begin{equation}\label{(2.10)}
|Q| \asymp |Q'|
\qquad \text{if} \quad (Q+B(0,R))\cap Q' \neq \emptyset.
\end{equation}
Let $B(c_Q, r_Q/2) \subset Q \subset B(d_Q,2R_Q)$
and $B(c_{Q'}, r_{Q'}/2) \subset Q' \subset B(d_{Q'},2R_{Q'})$,
where $Q,Q' \in \calQ$
(see \eqref{(2.2)}).
By \eqref{(2.3)}, \eqref{(2.4)} and \eqref{(2.10)},
we see that
$R_Q \asymp R_{Q'}$ and
$R_Q \ge \kappa_1$
for some constant $\kappa_1$ independent of $Q \in \calQ$.
Then
\begin{align*}
\emptyset
&\neq (Q+B(0,R))\cap Q'
\subset (B(d_Q,2R_Q)\cap B(0,R))\cap B(d_{Q'},2R_{Q'})
\\
&=B(d_Q,2R_Q+R) \cap B(d_{Q'},2R_{Q'})
\subset B(d_Q, (2+\kappa_1^{-1}R)R_Q)\cap B(d_{Q'},2R_{Q'}).
\end{align*}
Combining
$B(d_Q, (2+\kappa_1^{-1}R)R_Q)\cap B(d_{Q'},2R_{Q'}) \neq \emptyset$
and $R_Q \asymp R_{Q'}$,
we obtain that
$B(d_{Q'},2R_{Q'}) \subset B(d_Q, \kappa_2 R_Q)$
for some constant $\kappa_2 \ge 2$ independent of $Q,Q'$.
Hence,
since $c_Q \in B(d_{Q},\kappa_2 R_{Q})$
and $r_Q \asymp R_Q$,
if $(Q+B(0,R))\cap Q' \neq \emptyset$ then
\begin{equation}\label{(2.11)}
Q' \subset B(d_{Q'},2R_{Q'}) \subset
B(d_{Q},\kappa_2 R_{Q}) \subset B(c_Q, \kappa_3 r_Q),
\end{equation}
where $\kappa_3$ is independent of $Q,Q' \in \calQ$.
Let $\calQ_i$, $i=1,\dots,n_0$, be subsets of $\calQ$ such that
$\calQ=\cup_{i=1}^{n_0}\calQ_i$
and the elements of $\calQ_i$
are pairwise disjoint (see \cite[Lemma B.1]{B-N}).
Set $A_{Q}=\{Q' \in \calQ: (Q+B(0,R))\cap Q' \neq \emptyset\}$.
By \eqref{(2.11)},
we have
\[
\sum_{Q' \in A_Q \cap \calQ_i}|Q'| \le |B(c_Q, \kappa_3 r_Q)|
=(2\kappa_3)^n|B(c_Q,r_Q/2)|\le (2\kappa_3)^n |Q|
\]
for all $1\le i \le n_0$.
Therefore, by \eqref{(2.10)},
we see that
\[
(\sharp A_Q)|Q|
\le \sum_{i=1}^{n_0}\sum_{Q' \in A_Q \cap \calQ_i}(\kappa_4|Q'|)
\le\kappa_4 \sum_{i=1}^{n_0}(2\kappa_3)^n |Q|
=n_0(2\kappa_3)^n \kappa_4 |Q|,
\]
that is,
$\sharp A_Q \le n_0(2\kappa_3)^n \kappa_4$.
The proof is complete.
\end{proof}

\section{Pseudo-differential operators and $\alpha$-modulation spaces}
\label{section3}
In this section,
we prove Theorem \ref{1.1}.
For $\sigma \in \calS'(\R^n\times\R^n)$,
the pseudo-differential operator $\sigma(X,D)$
is defined by
\[
\sigma(X,D)f(x)
=\frac{1}{(2\pi)^n}
\int_{\R^n}e^{ix\cdot\xi}\, \sigma(x,\xi)\, \widehat{f}(\xi)\, d\xi
\qquad \text{for $f \in \calS(\R^n)$}.
\]
In order to prove Theorem \ref{1.1},
we prepare the following lemmas:
\begin{lem}[{\cite[Lemma 2.2.1]{Sugimoto}}]\label{3.1}
There exists a pair of functions
$\varphi, \chi \in \calS(\R^n)$ satisfying
\begin{enumerate}
\item
$\int_{\R^n} \varphi(\xi)\, \chi(\xi)\, d\xi=1$,
\item
$\mathrm{supp}\, \varphi \subset \{\xi \in \R^n: |\xi|<1\}$ and
$\mathrm{supp}\, \widehat{\chi} \subset \{\eta \in \R^n: |\eta|<1\}$.
\end{enumerate}
\end{lem}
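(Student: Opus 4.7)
The plan is to construct the pair $(\varphi, \chi)$ by hand. I would first fix $\chi$ so that its support condition is automatic, and then build $\varphi$ as a normalized bump inside the unit ball arranged so that condition (i) holds. The only subtlety is showing that the normalizing integral is nonzero, which I will handle with a Paley--Wiener/analyticity argument.

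Step one: fix any nonzero $\psi \in \calS(\R^n)$ with $\mathrm{supp}\, \psi \subset \{|\eta|<1\}$ (for instance a standard radial bump in $C_c^\infty$) and set $\chi = \calF^{-1}\psi$. Then $\chi \in \calS(\R^n)$ and $\widehat{\chi} = \psi$ has support in $\{|\eta|<1\}$, so condition (ii) on $\chi$ is satisfied. Since $\chi$ is the inverse Fourier transform of a compactly supported distribution, by Paley--Wiener it extends to an entire function on $\C^n$. In particular $\chi$ is real-analytic on $\R^n$ and, as it is not identically zero, cannot vanish on any nonempty open subset.

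Step two: pick a point $x_0$ in the open unit ball with $\chi(x_0) \neq 0$. By continuity choose an open ball $U$ centered at $x_0$ with $\overline{U} \subset \{|x|<1\}$ on which $|\chi(x) - \chi(x_0)| < |\chi(x_0)|/2$; then $\mathrm{Re}\,(\chi(x)/\chi(x_0)) > 1/2$ for all $x \in U$. Choose any nonnegative $\varphi_0 \in C_c^\infty(U)$ with $\varphi_0 \not\equiv 0$ and set
\[
I = \int_{\R^n} \varphi_0(x)\, \chi(x)\, dx
= \chi(x_0) \int_{\R^n} \varphi_0(x)\, \frac{\chi(x)}{\chi(x_0)}\, dx.
\]
The real part of the right-hand integral is bounded below by $\tfrac12 \int \varphi_0 > 0$, so $I \neq 0$. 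Defining $\varphi = I^{-1}\varphi_0 \in C_c^\infty(U) \subset \calS(\R^n)$ yields $\mathrm{supp}\, \varphi \subset \{|\xi|<1\}$ together with $\int \varphi\, \chi = 1$, which finishes the construction.

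The main (rather minor) obstacle in this proof is precisely the nonvanishing of the normalizing integral $I$: since $\chi$ is band-limited it is not compactly supported, and one cannot just set $\varphi$ proportional to $\chi$ on a small ball. This is resolved by the Paley--Wiener analyticity of $\chi$, which rules out open zero sets and lets us localize $\varphi_0$ in an open set where $\chi$ has essentially constant phase, making $I \neq 0$ transparent.
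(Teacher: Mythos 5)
Your construction is correct. Note that the paper itself offers no proof of this lemma: it is imported verbatim from Sugimoto's paper (Lemma 2.2.1 there), so there is no in-paper argument to compare against. Your two steps are sound: with the paper's convention $\calF\calF^{-1}=\mathrm{id}$, setting $\chi=\calF^{-1}\psi$ for a nonzero bump $\psi$ supported in the open unit ball does give $\widehat{\chi}=\psi$, and the Paley--Wiener/real-analyticity argument correctly guarantees a point $x_0$ in the open unit ball where $\chi(x_0)\neq 0$ (this step is genuinely needed, since $\chi$ is band-limited and a priori could be small or zero somewhere; continuity alone would only give a nonvanishing point somewhere in $\R^n$). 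The normalization via $\mathrm{Re}\,(\chi/\chi(x_0))>1/2$ on a ball $\overline{U}\subset\{|x|<1\}$ cleanly forces $I\neq 0$. If you want to avoid invoking analyticity altogether, a standard streamlining is to take $\widehat{\chi}=v*\overline{v(-\cdot)}$ with $0\le v\in C_c^{\infty}(\{|\eta|<1/2\})$, $v\not\equiv 0$; then $\chi=(2\pi)^n|\calF^{-1}v|^2\ge 0$ with $\chi(0)=(2\pi)^{-n}\bigl(\int v\bigr)^2>0$, so one may localize $\varphi_0\ge 0$ near the origin and the integral $\int\varphi_0\chi$ is positive by inspection. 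Either way the lemma holds; your proof stands.
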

\begin{lem}[{\cite[Lemma 2.2.2]{Sugimoto}}]\label{3.2}
Let $g_{\tau}(x)=g(x,\tau)$ be such that
\begin{enumerate}
\item
$g(x,\tau) \in L^2(\R_x^n \times \R_\tau^n)$,
\item
$\sup_{x \in \R^n}\|g(x,\cdot)\|_{L^1(\R^n)}<\infty$,
\item
$\mathrm{supp}\, \widehat{g_\tau}
\subset \Omega$,
\end{enumerate}
where $\widehat{g_\tau}(y)=\calF_1g(y,\tau)$ and
$\Omega$ is a compact subset of $\R^n$ independent of $\tau$.
If $h(x)=\int_{\R^n}e^{ix\cdot\tau}\, g(x,\tau)\, d\tau$,
then there exists a constant $C>0$ such that
\[
\|h\|_{L^2}\le
C |\Omega|^{1/2}\|g\|_{L^2(\R^n \times \R^n)},
\]
where $C$ is independent of $g$ and $\Omega$.
\end{lem}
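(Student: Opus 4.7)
The plan is to compute the Fourier transform of $h$ in the $x$-variable and exploit the support hypothesis on $\widehat{g_\tau}$ via Cauchy--Schwarz in order to gain the factor $|\Omega|^{1/2}$, before invoking Plancherel to pass back to $h$ and $g$. Conditions (1) and (2) serve mainly to make the formal manipulations rigorous: condition (2) gives $h\in L^\infty$ so that $h(x)$ is defined pointwise, and combined with (1) it makes Fubini's theorem applicable when pairing $h$ with any Schwartz test function.

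First, I would verify by exchanging the order of integration that
\[
\widehat{h}(\xi)=\int_{\R^n}\int_{\R^n}e^{-ix\cdot(\xi-\tau)}g(x,\tau)\,dx\,d\tau=\int_{\R^n}\widehat{g_\tau}(\xi-\tau)\,d\tau.
\]
The support hypothesis (3) forces the integrand to vanish unless $\xi-\tau\in\Omega$, so the effective domain of integration is $\tau\in\xi-\Omega$, a set of Lebesgue measure $|\Omega|$.

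Next, I would apply Cauchy--Schwarz on this effective domain to obtain
\[
|\widehat{h}(\xi)|^{2}\le|\Omega|\int_{\xi-\Omega}|\widehat{g_\tau}(\xi-\tau)|^{2}\,d\tau.
\]
Integrating in $\xi$, dropping the restriction in $\tau$, exchanging order of integration by Fubini, and substituting $y=\xi-\tau$ in the inner integral yields
\[
\|\widehat{h}\|_{L^{2}}^{2}\le|\Omega|\int_{\R^n}\|\widehat{g_\tau}\|_{L^{2}(\R^n)}^{2}\,d\tau=(2\pi)^{n}|\Omega|\,\|g\|_{L^{2}(\R^n\times\R^n)}^{2},
\]
where Plancherel in the $x$-variable is used for each fixed $\tau$. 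A final application of Plancherel to $h$ itself converts the bound into $\|h\|_{L^{2}}\le|\Omega|^{1/2}\|g\|_{L^{2}(\R^n\times\R^n)}$, which is the desired conclusion.

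The only genuine obstacle is verifying that the above distributional computation of $\widehat{h}$ really produces an $L^2$ function, so that the second application of Plancherel is legitimate. I would handle this by a truncation argument: approximate $g$ by compactly supported smooth functions satisfying the same three hypotheses with uniformly controlled constants, derive the inequality for each approximant (where all integrals are absolutely convergent), and pass to the limit using the $L^2$ convergence of $g$ together with the uniform $L^\infty$ control on the approximate $h$'s coming from condition (2).
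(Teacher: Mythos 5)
Your argument is correct and is essentially the standard proof of this lemma (which the paper does not reprove but imports from Sugimoto's Lemma 2.2.2): compute $\widehat{h}(\xi)=\int_{\R^n}\widehat{g_\tau}(\xi-\tau)\,d\tau$, use the support condition to restrict to $\tau\in\xi-\Omega$, apply Cauchy--Schwarz to gain $|\Omega|^{1/2}$, and finish with Fubini and Plancherel. Your closing remarks on justifying the distributional identification of $\widehat{h}$ (via hypotheses (1), (2) and an approximation or direct duality argument) adequately address the only point of rigor, so nothing is missing.
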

\begin{lem}[{\cite[Lemma 2.2.3]{Sugimoto}}]\label{3.3}
Let $\sigma_x(\xi)=\sigma(x,\xi)$ be such that
\begin{enumerate}
\item
$\sigma_x(\xi) \in L^1(\R_\xi^n) \cap L^2(\R_\xi^n)$,
\item
$\mathrm{supp}\, \widehat{\sigma_x} \subset \Omega$,
\end{enumerate}
where $\widehat{\sigma_x}(\eta)=\calF_2\sigma(x,\eta)$
and $\Omega$ is a compact subset of $\R^n$ independent of $x$.
Then there exists a constant $C>0$ such that
\[
\|\sigma(X,D)f\|_{L^2}
\le C|\Omega|^{1/2}\sup_{x \in \R^n}
\|\sigma(x,\cdot)\|_{L^2}\|f\|_{L^2}
\]
for all $f \in \calS(\R^n)$,
where $C$ is independent of $\sigma$ and $\Omega$.
\end{lem}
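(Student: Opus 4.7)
The plan is to rewrite $\sigma(X,D)f$ so that the compact Fourier support of $\sigma(x,\cdot)$ becomes visible, and then close the estimate by Cauchy--Schwarz together with Plancherel. Since $\widehat{\sigma_x}=\calF_2\sigma(x,\cdot)$ is supported in the compact set $\Omega$ for every $x$, Fourier inversion gives
\[
\sigma(x,\xi)=(2\pi)^{-n}\int_{\Omega}e^{i\xi\cdot\eta}\,\widehat{\sigma_x}(\eta)\,d\eta.
\]
Substituting this into the definition of $\sigma(X,D)f$ and carrying out the $\xi$-integral, which yields $f(x+\eta)$ by the usual inversion formula, produces the key representation
\[
\sigma(X,D)f(x)=(2\pi)^{-n}\int_{\Omega}\widehat{\sigma_x}(\eta)\,f(x+\eta)\,d\eta.
\]
The hypotheses $\sigma_x\in L^1\cap L^2$ and $f\in\calS(\R^n)$ ensure that the interchange of order of integration and the Fourier inversion are legitimate.

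Next, I apply Cauchy--Schwarz in $\eta$ on the finite-measure set $\Omega$ and invoke Plancherel on the $\widehat{\sigma_x}$ factor, obtaining the pointwise bound
\[
|\sigma(X,D)f(x)|^2\le (2\pi)^{-n}\,\|\sigma(x,\cdot)\|_{L^2}^2\int_{\Omega}|f(x+\eta)|^2\,d\eta.
\]
Pulling the supremum of $\|\sigma(x,\cdot)\|_{L^2}$ outside, integrating in $x$, and using Fubini together with translation invariance of Lebesgue measure give
\[
\|\sigma(X,D)f\|_{L^2}^2\le (2\pi)^{-n}|\Omega|\,\sup_{x\in\R^n}\|\sigma(x,\cdot)\|_{L^2}^2\,\|f\|_{L^2}^2,
\]
and taking square roots yields the claimed inequality with $C=(2\pi)^{-n/2}$, independent of $\sigma$, $\Omega$, and $f$.

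The argument is essentially routine once the representation is established, so I do not expect any substantial obstacle. The only point requiring mild care is justifying the interchange of integrations in the derivation of the representation, but this follows from $\widehat{\sigma_x}\in L^\infty$ (a consequence of $\sigma_x\in L^1$), the compactness of $\Omega$, and the rapid decay of $\widehat{f}$. The strategy parallels the proof of Lemma \ref{3.2}: in both situations, the compact Fourier support on a set of measure $|\Omega|$ yields the factor $|\Omega|^{1/2}$ through one application of Cauchy--Schwarz.
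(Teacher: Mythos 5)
Your proof is correct: the representation $\sigma(X,D)f(x)=(2\pi)^{-n}\int_{\Omega}\widehat{\sigma_x}(\eta)\,f(x+\eta)\,d\eta$, followed by Cauchy--Schwarz over $\Omega$, Plancherel in $\eta$, and Fubini with translation invariance in $x$, yields exactly the stated bound with $C=(2\pi)^{-n/2}$. The paper does not reprove this lemma (it cites Sugimoto), but your argument is precisely the $L^2$-analogue of the paper's own proof of Lemma~\ref{4.2}, which rests on the identical kernel representation, so there is nothing to flag.
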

\begin{lem}\label{3.4}
Let $0 \le \alpha \le 1$, $s_1,s_2 \in \R$ and
$\sigma \in
M_{(s_1,s_2),(\alpha,\alpha)}^{(\infty,\infty),(1,1)}
(\R^n \times \R^n)$.
Then there exists a family
$\{\sigma_{\epsilon}\}_{0<\epsilon<1} \subset \calS(\R^n \times \R^n)$
such that
\begin{enumerate}
\item
$\langle \sigma(X,D)f,g \rangle
=\lim_{\epsilon \to 0}
\langle \sigma_{\epsilon}(X,D)f,g \rangle$
for all $f,g \in \calS(\R^n)$,
\item
$\|\sigma_{\epsilon}
\|_{M_{(s_1,s_2),(\alpha,\alpha)}^{(\infty,\infty),(1,1)}}
\le C\|\sigma
\|_{M_{(s_1,s_2),(\alpha,\alpha)}^{(\infty,\infty),(1,1)}}$
for all $0<\epsilon<1$,
\end{enumerate}
where $\langle\cdot,\cdot \rangle
=\langle\cdot,\overline{\cdot} \rangle_{\calS'\times\calS}$
and $C$ is independent of $\sigma$.
\end{lem}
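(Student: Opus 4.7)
The plan is to construct $\sigma_\epsilon$ by combining a finite truncation of the decomposition \eqref{(2.9)} with a Schwartz cutoff in the physical variables. Fix once and for all a $\chi\in\calS(\R^n)$ with $\chi(0)=1$ and $\mathrm{supp}\,\widehat{\chi}\subset B(0,1)$ (such $\chi$ is obtained from any $\widehat{\chi}\in C_c^\infty(B(0,1))$ with the appropriate normalization), and set $\chi_\epsilon(x):=\chi(\epsilon x)$, so that $\widehat{\chi_\epsilon}$ is supported in $B(0,\epsilon)\subset B(0,1)$. For each $\epsilon\in(0,1)$, fix a finite set $F_\epsilon\subset\calQ\times\calQ$ that exhausts $\calQ\times\calQ$ as $\epsilon\to0$ (for instance, $F_\epsilon=\{(Q,Q'):|x_Q|,|\xi_{Q'}|\le 1/\epsilon\}$). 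Define
\[
\tilde\sigma_\epsilon:=\sum_{(Q,Q')\in F_\epsilon}\psi_Q(D_x)\psi_{Q'}(D_\xi)\sigma,
\qquad
\sigma_\epsilon(x,\xi):=\chi_\epsilon(x)\chi_\epsilon(\xi)\,\tilde\sigma_\epsilon(x,\xi).
\]
Each summand in $\tilde\sigma_\epsilon$ is an $L^\infty$ function bandlimited to the compact set $Q\times Q'$, so $\tilde\sigma_\epsilon$ is a bounded smooth function with bounded derivatives (Bernstein); multiplied by the Schwartz factor $\chi_\epsilon\otimes\chi_\epsilon$, we obtain $\sigma_\epsilon\in\calS(\R^n\times\R^n)$.

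For (1), \eqref{(2.9)} yields $\tilde\sigma_\epsilon\to\sigma$ in $\calS'(\R^{2n})$, while a direct Taylor/rapid-decay estimate shows $(\chi_\epsilon\otimes\chi_\epsilon-1)\phi\to0$ in $\calS(\R^{2n})$ for every $\phi\in\calS$. Writing
\[
\langle\sigma_\epsilon,\phi\rangle
=\langle\tilde\sigma_\epsilon,\phi\rangle
+\langle\tilde\sigma_\epsilon,(\chi_\epsilon\otimes\chi_\epsilon-1)\phi\rangle,
\]
the first term converges to $\langle\sigma,\phi\rangle$, and the second tends to $0$: by Banach--Steinhaus the weakly convergent family $\{\tilde\sigma_\epsilon\}$ is equicontinuous on $\calS$, so it sends test functions tending to $0$ in $\calS$ to $0$. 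Then $\sigma_\epsilon\to\sigma$ in $\calS'(\R^{2n})$. Pairing against the Schwartz kernel $K_{f,g}(x,\xi)=(2\pi)^{-n}e^{ix\cdot\xi}\widehat{f}(\xi)\overline{g(x)}$, for which $\langle\sigma(X,D)f,g\rangle=\langle\sigma,K_{f,g}\rangle$, yields (1).

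The crux of the proof is the uniform bound (2). Multiplication by $\chi_\epsilon(x)\chi_\epsilon(\xi)$ corresponds, on the Fourier side, to convolution with $\widehat{\chi_\epsilon}\otimes\widehat{\chi_\epsilon}$, whose support lies in $B(0,1)\times B(0,1)$. Expanding,
\[
\psi_Q(D_x)\psi_{Q'}(D_\xi)\sigma_\epsilon
=\sum_{(Q_1,Q_1')\in F_\epsilon}
\psi_Q(D_x)\psi_{Q'}(D_\xi)\bigl[(\chi_\epsilon\otimes\chi_\epsilon)\,\psi_{Q_1}(D_x)\psi_{Q_1'}(D_\xi)\sigma\bigr],
\]
and each summand is nonzero only when $(Q_1+B(0,1))\cap Q\neq\emptyset$ and $(Q_1'+B(0,1))\cap Q'\neq\emptyset$; by Lemma \ref{2.1}(ii) at most $O(1)$ indices $(Q_1,Q_1')$ contribute for each $(Q,Q')$. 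Using $\|\chi_\epsilon\|_{L^\infty}=\|\chi\|_{L^\infty}$ together with the BAPU bound $\sup_Q\|\calF^{-1}\psi_Q\|_{L^1}<\infty$, each such summand is controlled in $L^\infty$ by a constant times $\|\psi_{Q_1}(D_x)\psi_{Q_1'}(D_\xi)\sigma\|_{L^\infty}$, uniformly in $\epsilon$. Weighting by $\langle x_Q\rangle^{s_1}\langle\xi_{Q'}\rangle^{s_2}$, absorbing them into $\langle x_{Q_1}\rangle^{s_1}\langle\xi_{Q_1'}\rangle^{s_2}$ via Lemma \ref{2.1}(i), and swapping the order of summation (using Lemma \ref{2.1}(ii) once more to bound the reverse multiplicity) produces the inequality in (2).

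The main obstacle is this norm estimate: multiplying $\tilde\sigma_\epsilon$ by the physical-space cutoff spreads Fourier support and, a priori, could couple arbitrarily distant frequency blocks $(Q,Q')$ and $(Q_1,Q_1')$. The compactness of $\mathrm{supp}\,\widehat{\chi}$ confines this spreading to a scale $O(1)$ independent of $\epsilon$, so the almost-orthogonality expressed in Lemma \ref{2.1} both truncates the interaction to finitely many neighbours and lets us trade the outer weights for the inner ones without loss.
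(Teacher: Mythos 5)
Your proof is correct and its essential content coincides with the paper's: the paper likewise defines $\sigma_\epsilon$ as the product of a dilated cutoff $\Phi_\epsilon$ with $\mathrm{supp}\,\calF_{1,2}\Phi_\epsilon\subset B(0,1)\times B(0,1)$ and a regularization of $\sigma$, and the heart of both arguments is the same estimate for that multiplication, obtained from the finite-overlap and weight-comparison statements of Lemma \ref{2.1} exactly as you do. The only difference is cosmetic: the paper regularizes by convolving with a mollifier $\Psi_\epsilon$ (trivially bounded since convolution commutes with $\psi_Q(D_x)\psi_{Q'}(D_\xi)$ and Young's inequality applies), whereas you truncate the decomposition \eqref{(2.9)} to a finite subfamily (trivially bounded as a subsum of nonnegative terms); both devices serve only to make $\sigma_\epsilon$ a Schwartz function and are interchangeable.
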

\begin{proof}
Let $\varphi,\psi \in \calS(\R^n)$ be such that
$\varphi(0)=1$,
$\mathrm{supp}\, \widehat{\varphi} \subset \{|y|<1\}$,
$\int_{\R^n}\psi(x)\, dx=1$.
Set $\Phi(x,\xi)=\varphi(x)\, \varphi(\xi)$,
$\Psi(x,\xi)=\psi(x)\, \psi(\xi)$ and
\[
\sigma_\epsilon(x,\xi)
=\Phi_\epsilon(x,\xi)\,
(\Psi_{\epsilon}*\sigma)(x,\xi),
\]
where $\Phi_\epsilon(x,\xi)=\Phi(\epsilon x,\epsilon \xi)$
and
$\Psi_\epsilon(x,\xi)=\epsilon^{-2n}\Psi(x/\epsilon,\xi/\epsilon)$.
Note that $\sigma_{\epsilon} \in \calS(\R^n \times \R^n)$,
$\Phi(0,0)=1$ and $\int_{\R^{2n}}\Psi(x,\xi)\, dxd\xi=1$.
Then the well known fact $\sigma_{\epsilon} \to \sigma$
in $\calS'(\R^{2n})$ as $\epsilon \to 0$ implies (1).

Let us consider (2).
If
\begin{equation}\label{(3.1)}
\|\Phi_\epsilon\, \sigma
\|_{M_{(s_1,s_2),(\alpha,\alpha)}^{(\infty,\infty),(1,1)}}
\le C\|\sigma
\|_{M_{(s_1,s_2),(\alpha,\alpha)}^{(\infty,\infty),(1,1)}}
\qquad \text{for all $0<\epsilon<1$}
\end{equation}
and 
\begin{equation}\label{(3.2)}
\|\Psi_\epsilon *\sigma
\|_{M_{(s_1,s_2),(\alpha,\alpha)}^{(\infty,\infty),(1,1)}}
\le C\|\sigma
\|_{M_{(s_1,s_2),(\alpha,\alpha)}^{(\infty,\infty),(1,1)}}
\qquad \text{for all $0<\epsilon<1$},
\end{equation}
then
\[
\|\Phi_{\epsilon}(\Psi_{\epsilon}*\sigma)
\|_{M_{(s_1,s_2),(\alpha,\alpha)}^{(\infty,\infty),(1,1)}}
\le C\|\Psi_{\epsilon}*\sigma
\|_{M_{(s_1,s_2),(\alpha,\alpha)}^{(\infty,\infty),(1,1)}}
\le C\|\sigma
\|_{M_{(s_1,s_2),(\alpha,\alpha)}^{(\infty,\infty),(1,1)}}
\]
for all $0<\epsilon<1$, and this is the desired estimate.
Let us prove \eqref{(3.1)} and \eqref{(3.2)}.
But, \eqref{(3.2)} is trivial since
\begin{align*}
&\|\Psi_\epsilon *\sigma
\|_{M_{(s_1,s_2),(\alpha,\alpha)}^{(\infty,\infty),(1,1)}}
=\sum_{Q,Q' \in \calQ}\langle x_Q \rangle^{s_1}
\langle \xi_{Q'} \rangle^{s_2}
\|\psi_Q(D_x)\psi_{Q'}(D_{\xi})(\Psi_\epsilon *\sigma)
\|_{L^{\infty}(\R^n \times \R^n)}
\\
&=\sum_{Q,Q' \in \calQ}\langle x_Q \rangle^{s_1}
\langle \xi_{Q'} \rangle^{s_2}
\|\Psi_\epsilon *(\psi_Q(D_x)\psi_{Q'}(D_{\xi})\sigma)
\|_{L^{\infty}(\R^n \times \R^n)}
\\
&\le \sum_{Q,Q' \in \calQ}\langle x_Q \rangle^{s_1}
\langle \xi_{Q'} \rangle^{s_2}
\|\Psi_\epsilon\|_{L^{1}(\R^n \times \R^n)}
\|\psi_Q(D_x)\psi_{Q'}(D_{\xi})\sigma
\|_{L^{\infty}(\R^n \times \R^n)},
\end{align*}
where $\calQ$ is an $\alpha$-covering with 
a corresponding BAPU $\{\psi_Q\}_{Q \in \calQ} \subset \calS(\R^n)$.
We prove \eqref{(3.1)}.
Noting
\[
\mathrm{supp}\, \calF_{1,2}\Phi_\epsilon
\subset \{(y,\eta): |y|<\epsilon, |\eta|<\epsilon\}
\subset \{(y,\eta): |y|<1, |\eta|<1\}
\]
for all $0<\epsilon<1$,
we see that
\[
\mathrm{supp}\,
\calF_{1,2}[\Phi_\epsilon\,
\psi_Q(D_x)\psi_{Q'}(D_{\xi})\sigma]
\subset \{(y,\eta): y \in Q+B(0,1), \eta \in Q'+B(0,1)\}.
\]
Since $\sup_{Q \in \calQ}\|\calF^{-1}\psi_Q\|_{L^1}<\infty$,
we have by \eqref{(2.9)} and Lemma \ref{2.1}
\begin{align*}
&\|\Phi_\epsilon\, \sigma
\|_{M_{(s_1,s_2),(\alpha,\alpha)}^{(\infty,\infty),(1,1)}}
=\sum_{\widetilde{Q},\widetilde{Q}' \in \calQ}
\langle x_{\widetilde{Q}} \rangle^{s_1}
\langle \xi_{\widetilde{Q}'} \rangle^{s_2}
\|\psi_{\widetilde{Q}}(D_x)\psi_{\widetilde{Q}'}(D_{\xi})
(\Phi_\epsilon\, \sigma)
\|_{L^{\infty}(\R^n \times \R^n)}
\\
&\le \sum_{\widetilde{Q},\widetilde{Q}' \in \calQ}
\sum_{Q,Q' \in \calQ}
\\
&\qquad \times
\langle x_{\widetilde{Q}} \rangle^{s_1}
\langle \xi_{\widetilde{Q}'} \rangle^{s_2}
\|\psi_{\widetilde{Q}}(D_x)\psi_{\widetilde{Q}'}(D_{\xi})
[\Phi_\epsilon\, 
(\psi_{Q}(D_x)\psi_{Q'}(D_{\xi})\sigma)]
\|_{L^{\infty}(\R^n \times \R^n)}
\\
&=\sum_{Q,Q' \in \calQ}
\sum_{\scriptstyle \widetilde{Q}\cap (Q+B(0,1)) \neq \emptyset
\scriptstyle \atop \widetilde{Q} \in \calQ}
\sum_{\scriptstyle \widetilde{Q}' \cap (Q'+B(0,1)) \neq \emptyset
\scriptstyle \atop \widetilde{Q}' \in \calQ}
\\
&\qquad \times
\langle x_{\widetilde{Q}} \rangle^{s_1}
\langle \xi_{\widetilde{Q}'} \rangle^{s_2}
\|\psi_{\widetilde{Q}}(D_x)\psi_{\widetilde{Q}'}(D_{\xi})
[\Phi_\epsilon\, 
(\psi_{Q}(D_x)\psi_{Q'}(D_{\xi})\sigma)]
\|_{L^{\infty}(\R^n \times \R^n)}
\\
&\le C(n_0')^2 \sum_{Q,Q' \in \calQ}
\langle x_{Q} \rangle^{s_1}
\langle \xi_{Q'} \rangle^{s_2}
\|\Phi_\epsilon\, 
(\psi_{Q}(D_x)\psi_{Q'}(D_{\xi})\sigma)
\|_{L^{\infty}(\R^n \times \R^n)}
\\
&\le C(n_0')^2\|\Phi\|_{L^{\infty}(\R^n\times \R^n)}
\|\sigma\|_{M_{(s_1,s_2),(\alpha,\alpha)}^{(\infty,\infty),(1,1)}},
\end{align*}
where $n_0'$ is as in Lemma \ref{2.1} (2).
The proof is complete.
\end{proof}
We are now ready to prove Theorem \ref{1.1}.

\medskip
\noindent
{\it Proof of Theorem \ref{1.1}.}
By Lemma \ref{3.4},
it is enough to prove Theorem \ref{1.1}
with $\sigma \in \calS(\R^n\times\R^n)$.
Let $\varphi,\chi$ be as in Lemma \ref{3.1},
$\sigma \in \calS(\R^n \times \R^n)$ and $f \in \calS(\R^n)$.
By Lemma \ref{3.1},
we have
\begin{equation}\label{(3.3)}
\begin{split}
h(x)
&=\sigma(X,D)f(x)
=\frac{1}{(2\pi)^n}
\int_{\R^n}e^{ix\cdot\xi}\,
\sigma(x,\xi)\,
\widehat{f}(\xi)\, d\xi
\\
&=\frac{1}{(2\pi)^n}
\int_{\R^n}e^{ix\cdot\xi}\,
\sigma(x,\xi)\,
\widehat{f}(\xi)
\left( \int_{\R^n}(\varphi\chi)(\xi-\tau)\, d\tau \right) d\xi
\\
&=\int_{\R^n}
e^{ix\cdot\tau}
\left(\frac{1}{(2\pi)^n}
\int_{\R^n}e^{ix\cdot\xi}\,
\sigma(x,\xi+\tau)\,
\varphi(\xi)\, \chi(\xi)\, 
\widehat{f}(\xi+\tau)\, 
d\xi \right) d\tau.
\end{split}
\end{equation}
Let $0 \le \alpha \le 1$
and $\calQ$ be an $\alpha$-covering with a corresponding
BAPU $\{\psi_Q\}_{Q \in \calQ} \subset \calS(\R^n)$.
Set
\[
\sigma_\tau(x,\xi)=\sigma(x,\xi+\tau)
\quad \text{and} \quad
f_{\tau}=\calF^{-1}[\varphi\, \widehat{f}(\cdot+\tau)].
\]
Then, by \eqref{(2.9)},
\begin{equation}\label{(3.4)}
\sigma_\tau(x,\xi)
=\sum_{Q,Q' \in \calQ}
[\psi_Q(D_x)\psi_{Q'}(D_{\xi})\sigma_\tau](x,\xi)
=\sum_{Q,Q' \in \calQ}\sigma_{\tau,Q,Q'}(x,\xi),
\end{equation}
where
\[
\sigma_{\tau,Q,Q'}(x,\xi)
=[\psi_Q(D_x)\psi_{Q'}(D_{\xi})\sigma_\tau](x,\xi).
\]
Note that
$\sigma_{\tau,Q,Q'}(x,\xi) \in \calS(\R_x^n\times\R_\xi^n)$.
By \eqref{(3.3)} and \eqref{(3.4)},
\begin{equation}\label{(3.5)}
\begin{split}
h(x)
&=\int_{\R^n}
e^{ix\cdot\tau}
\left(\frac{1}{(2\pi)^n}
\int_{\R^n}e^{ix\cdot\xi}\,
\sigma_\tau(x,\xi)\,
\chi(\xi) \left(\varphi(\xi)\, \widehat{f}(\xi+\tau)\right) 
d\xi \right) d\tau
\\
&=\int_{\R^n}e^{ix\cdot\tau}
\sigma_\tau(X,D)\chi(D)f_{\tau}(x)\, d\tau
\\
&=\sum_{Q,Q' \in \calQ}\int_{\R^n}
e^{ix\cdot\tau}\sigma_{\tau,Q,Q'}(X,D)
\chi(D)f_{\tau}(x)\, d\tau
=\sum_{Q,Q' \in \calQ}h_{Q,Q'}(x),
\end{split}
\end{equation}
where
\[
h_{Q,Q'}(x)
=\int_{\R^n}e^{ix\cdot\tau}g_{Q,Q'}(x,\tau)\, d\tau,
\qquad
g_{Q,Q'}(x,\tau)
=\sigma_{\tau,Q,Q'}(X,D)
\chi(D)f_{\tau}(x).
\]
We consider $h_{Q,Q'}$,
and set $(g_{Q,Q'})_\tau(x)=g_{Q,Q'}(x,\tau)$.
Since $\mathrm{supp}\, \psi_Q \subset Q$,
$\mathrm{supp}\, \varphi \subset B(0,1)$ and
\begin{align*}
\widehat{(g_{Q,Q'})_\tau}(y)
&=\frac{1}{(2\pi)^n}
\int_{\R^n}\calF_{x \to y}
\left[e^{ix\cdot\xi}\, \sigma_{\tau,Q,Q'}(x,\xi)\right]
\chi(\xi)\, \varphi(\xi)\, \widehat{f}(\xi+\tau)\, d\xi
\\
&=\frac{1}{(2\pi)^n}
\int_{\R^n}[\calF_1\sigma_{\tau,Q,Q'}](y-\xi,\xi)
\chi(\xi)\, \varphi(\xi)\, \widehat{f}(\xi+\tau)\, d\xi
\\
&=\frac{1}{(2\pi)^n}
\int_{\R^n}\psi_Q(y-\xi)\,
[\calF_1(\psi_{Q'}(D_{\xi})\sigma_{\tau})](y-\xi,\xi)\,
\chi(\xi)\, \varphi(\xi)\, \widehat{f}(\xi+\tau)\, d\xi,
\end{align*}
we see that
$\mathrm{supp}\, \widehat{(g_{Q,Q'})_\tau} \subset Q+B(0,1)$.
On the other hand,
it is easy to show that
$\sup_{x \in \R^n}\|g_{Q,Q'}(x,\cdot)\|_{L^1(\R^n)}<\infty$
since
\begin{equation}\label{(3.6)}
\|\sigma_{\tau,Q,Q'}\|_{L^{\infty}(\R^n \times \R^n)}
\le \|\calF^{-1}\psi_Q\|_{L^1}\|\calF^{-1}\psi_{Q'}\|_{L^1}
\|\sigma\|_{L^{\infty}(\R^n \times \R^n)},
\end{equation}
and $g_{Q,Q'}(x,\tau) \in L^{2}(\R_x^n \times \R_\tau^n)$
will be proved in the below.
Hence, by Lemma \ref{3.2} and \eqref{(2.3)},
we have
\begin{equation}\label{(3.7)}
\|h_{Q,Q'}\|_{L^2}
\le C|Q+B(0,1)|^{1/2}\|g_{Q,Q'}\|_{L^2(\R^n \times \R^n)}
\le C|Q|^{1/2}\|g_{Q,Q'}\|_{L^2(\R^n \times \R^n)}
\end{equation}
for all $Q,Q' \in \calQ$.
We next consider $g_{Q,Q'}$,
and set
$\widetilde{\sigma_{\tau,Q,Q'}}(x,\xi)
=\sigma_{\tau,Q,Q'}(x,\xi) \chi(\xi)$
and 
$(\widetilde{\sigma_{\tau,Q,Q'}})_x(\xi)
=\widetilde{\sigma_{\tau,Q,Q'}}(x,\xi)$.
Then
\begin{equation}\label{(3.8)}
g_{Q,Q'}(x,\tau)=\widetilde{\sigma_{\tau,Q,Q'}}(X,D)f_{\tau}(x).
\end{equation}
Since $\mathrm{supp}\, \psi_{Q'} \subset Q'$,
$\mathrm{supp}\, \widehat{\chi} \subset B(0,1)$ and
\begin{align*}
\calF[(\widetilde{\sigma_{\tau,Q,Q'}})_x](\eta)
&=\frac{1}{(2\pi)^n}
(\calF_2\sigma_{\tau,Q,Q'}(x,\cdot)) * \widehat{\chi}(\eta)
\\
&=\frac{1}{(2\pi)^n}
(\psi_{Q'}(\calF_2\psi_Q(D_x)\sigma_{\tau})(x,\cdot))
*\widehat{\chi}(\eta),
\end{align*}
we see that
$\mathrm{supp}\, \calF[(\widetilde{\sigma_{\tau,Q,Q'}})_x]
\subset Q'+B(0,1)$.
On the other hand,
\eqref{(3.6)} gives
$(\widetilde{\sigma_{\tau,Q,Q'}})_x(\xi)
\in L^1(\R_\xi^n) \cap L^2(\R_\xi^n)$.
Thus, by \eqref{(2.3)}, \eqref{(3.8)} and Lemma \ref{3.3},
we have
\begin{align*}
\|g_{Q,Q'}(\cdot,\tau)\|_{L^2}
&\le C|Q'+B(0,1)|^{1/2}\sup_{x \in \R^n}
\|\widetilde{\sigma_{\tau,Q,Q'}}(x,\cdot)\|_{L^2}
\|f_{\tau}\|_{L^2}
\\
&\le C|Q'|^{1/2}
\|\sigma_{\tau,Q,Q'}\|_{L^{\infty}(\R^n \times \R^n)}
\|\chi\|_{L^2}
\|f_{\tau}\|_{L^2}
\\
&=C|Q'|^{1/2}
\|\psi_Q(D_x)\psi_{Q'}(D_{\xi})\sigma\|_{L^{\infty}(\R^n \times \R^n)}
\|f_{\tau}\|_{L^2}
\end{align*}
for all $Q,Q' \in \calQ$.
This implies
\begin{equation}\label{(3.9)}
\begin{split}
&\|g_{Q,Q'}\|_{L^2(\R^n \times \R^n)}
=\left\{
\int_{\R^n} \|g_{Q,Q'}(\cdot,\tau)\|_{L^2}^2d\tau
\right\}^{1/2}
\\
&\le C|Q'|^{1/2}
\|\psi_Q(D_x)\psi_{Q'}(D_{\xi})\sigma\|_{L^{\infty}(\R^n \times \R^n)}
\left\{\int_{\R^n} \left(\int_{\R^n}
|f_{\tau}(x)|^2\, dx\right) d\tau \right\}^{1/2}
\\
&=C|Q'|^{1/2}
\|\psi_Q(D_x)\psi_{Q'}(D_{\xi})\sigma\|_{L^{\infty}(\R^n \times \R^n)}
\left\{\int_{\R^n} \left(\int_{\R^n}
\left|\widehat{f_{\tau}}(\xi)\right|^2\,
d\xi\right) d\tau \right\}^{1/2}
\\
&=C|Q'|^{1/2}
\|\psi_Q(D_x)\psi_{Q'}(D_{\xi})\sigma
\|_{L^{\infty}(\R^n \times \R^n)}
\|\varphi\|_{L^2}\|f\|_{L^2}
\end{split}
\end{equation}
for all $Q,Q' \in \calQ$.
Recall that
$\langle x_Q \rangle^{\alpha n} \asymp |Q|$
and $\langle \xi_{Q'} \rangle^{\alpha n} \asymp |Q'|$
for all $Q,Q' \in \calQ$,
where $x_Q \in Q$ and $\xi_{Q'} \in Q'$
(see the definition of an $\alpha$-covering).
Therefore,
by \eqref{(3.5)}, \eqref{(3.7)} and \eqref{(3.9)},
\begin{align*}
&\|\sigma(X,D)f\|_{L^2}
=\|h\|_{L^2}
\le \sum_{Q,Q' \in \calQ}\|h_{Q,Q'}\|_{L^2}
\\
&\le C\left(\sum_{Q,Q' \in \calQ}|Q|^{1/2}|Q'|^{1/2}
\|\psi_Q(D_x)\psi_{Q'}(D_{\xi})
\sigma\|_{L^{\infty}(\R^n \times \R^n)}\right)
\|f\|_{L^2}
\\
&\le C\left(\sum_{Q,Q' \in \calQ}
\langle x_Q \rangle^{\alpha n/2}
\langle \xi_{Q'} \rangle^{\alpha n/2}
\|\psi_Q(D_x)\psi_{Q'}(D_{\xi})
\sigma\|_{L^{\infty}(\R^n \times \R^n)}\right)
\|f\|_{L^2}.
\end{align*}
This is the desired result.

\section{Commutators and $\alpha$-modulation spaces}\label{section4}
In this section,
we prove Theorem \ref{1.2}.
We recall the definition of commutators.
Let $a$ be a Lipschitz function on $\R^n$,
that is,
\begin{equation}\label{(4.1)}
|a(x)-a(y)| \le A|x-y|
\qquad \text{for all $x,y \in \R^n$}.
\end{equation}
Note that $a$ satisfies \eqref{(4.1)} if and only if
$a$ is differentiable
(in the ordinary sense)
and $\partial^{\beta}a \in L^{\infty}(\R^n)$
for $|\beta|=1$
(see \cite[Chapter 8, Theorem 3]{Stein}).
If $T$ is a bounded linear operator on $L^2(\R^n)$,
then $T(af)$ and $aTf$ make sense
as elements in $L_{\mathrm{loc}}^2(\R^n)$
when $f \in \calS(\R^n)$,
since $|a(x)| \le C(1+|x|)$ for some constant $C>0$.
Hence, the commutator $[T,a]$ can be defined by
\[
[T,a]f(x)=T(af)(x)-a(x)Tf(x)
\qquad \text{for $f \in \calS(\R^n)$}.
\]
In order to prove Theorem \ref{1.2},
we prepare the following lemmas:
\begin{lem}\label{4.1}
Let $T$ be a bounded linear operator on $L^2(\R^n)$,
and $a$ be a Lipschitz function on $\R^n$
with $\| \nabla a\|_{L^{\infty}} \neq 0$.
Then there exist $\epsilon(a)>0$ and
$\{a_{\epsilon}\}_{0<\epsilon<\epsilon(a)} \subset \calS(\R^n)$
such that
\begin{enumerate}
\item
$\langle [T,a]f,g\rangle
=\lim_{\epsilon \to 0}
\langle [T,a_{\epsilon}]f,g\rangle$
for all $f,g \in \calS(\R^n)$,
\item
$\|\nabla a_{\epsilon}\|_{L^{\infty}}
\le C\|\nabla a\|_{L^{\infty}}$
for all $0<\epsilon<\epsilon(a)$,
\end{enumerate}
where $C$ is independent of $T$ and $a$,
$\langle \cdot,\cdot \rangle$ denotes the $L^2$-inner product,
and $\nabla a=(\partial_1 a, \dots, \partial_n a)$.
\end{lem}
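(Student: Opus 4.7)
The plan is to construct $a_\epsilon$ as a Schwartz-localized mollification of $a$. I first reduce to the case $a(0)=0$: since $T$ is linear, $[T,c]=0$ on Schwartz functions for any constant $c$, so replacing $a$ by $a-a(0)$ alters neither $[T,a]$ nor $\nabla a$. Under this normalization one has the linear bound $|a(x)|\le\|\nabla a\|_{L^\infty}|x|$. Fix $\phi,\psi\in\calS(\R^n)$ with $\phi\ge 0$, $\int_{\R^n}\phi(y)\,dy=1$, and $\psi(0)=1$; put $\phi_\epsilon(y)=\epsilon^{-n}\phi(y/\epsilon)$, and define
$$a_\epsilon(x)=\psi(\epsilon x)\,(a*\phi_\epsilon)(x).$$
Since $a*\phi_\epsilon$ is $C^\infty$ with at most linear growth in $x$ and with every derivative uniformly bounded (with bounds depending on $\epsilon$), and since $\psi(\epsilon\,\cdot)$ is Schwartz, the product $a_\epsilon$ lies in $\calS(\R^n)$.

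For (2), I differentiate:
$$\nabla a_\epsilon(x)=\psi(\epsilon x)\,(\nabla a*\phi_\epsilon)(x)+\epsilon\,(\nabla\psi)(\epsilon x)\,(a*\phi_\epsilon)(x).$$
The first term is controlled in sup-norm by $\|\psi\|_{L^\infty}\|\nabla a\|_{L^\infty}$. For the second, the bound $|a(x-y)|\le\|\nabla a\|_{L^\infty}(|x|+|y|)$ and a change of variables yield $|(a*\phi_\epsilon)(x)|\le\|\nabla a\|_{L^\infty}(|x|+\epsilon\int|z||\phi(z)|\,dz)$. Multiplying by $\epsilon|(\nabla\psi)(\epsilon x)|$ and using the crucial identity $|\epsilon x\,(\nabla\psi)(\epsilon x)|\le\|t\,\nabla\psi(t)\|_{L^\infty}$ to absorb the product $\epsilon|x|$, I obtain for $0<\epsilon<1$ a bound of the form $C\|\nabla a\|_{L^\infty}$ with $C$ depending only on $\phi$ and $\psi$. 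Taking $\epsilon(a)=1$ therefore proves (2).

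For (1), I expand
$$\langle[T,a_\epsilon]f,g\rangle=\langle a_\epsilon f,T^{*}g\rangle-\int_{\R^n}a_\epsilon(x)\,Tf(x)\,\overline{g(x)}\,dx.$$
Pointwise $a_\epsilon(x)\to a(x)$ as $\epsilon\to 0$ (using $\psi(0)=1$ together with $a*\phi_\epsilon\to a$ pointwise, which is standard for continuous $a$). Moreover $|a_\epsilon(x)|\le C(1+|x|)$ uniformly in $\epsilon\in(0,1)$. Hence by dominated convergence $a_\epsilon f\to af$ and $a_\epsilon\,\overline{g}\to a\,\overline{g}$ in $L^2(\R^n)$, so the two terms on the right converge to $\langle af,T^{*}g\rangle=\langle T(af),g\rangle$ and $\langle a\,Tf,g\rangle$, respectively; subtracting gives (1).

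The main obstacle is keeping the estimate (2) uniform in $\epsilon$: if one omits the preliminary reduction $a(0)=0$ and simply sets $a_\epsilon=\psi(\epsilon\cdot)(a*\phi_\epsilon)$, then the cross term $\epsilon(\nabla\psi)(\epsilon x)\cdot(a*\phi_\epsilon)(x)$ contributes a quantity of order $\epsilon|a(0)|\|\nabla\psi\|_{L^\infty}$, which is \emph{not} controlled by $\|\nabla a\|_{L^\infty}$. The normalization $a(0)=0$ is exactly what converts the factor $\epsilon$ arising from the chain rule into cancellation against the power of $|x|$ produced by the linear bound on $a$, and is what makes the Lipschitz constant stable under the approximation.
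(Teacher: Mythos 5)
Your construction $a_\epsilon(x)=\psi(\epsilon x)\,(a*\phi_\epsilon)(x)$ and the dominated-convergence argument for (1) are exactly the paper's, and the proof is correct. The only real difference is in (2): the paper keeps $a(0)$ and absorbs the troublesome cross term $\epsilon|a(0)|\,\|\nabla\varphi\|_{L^\infty}$ by choosing $\epsilon(a)=\min\{\|\nabla a\|_{L^\infty}/|a(0)|,1\}$, whereas your preliminary normalization $a\mapsto a-a(0)$ (legitimate, since constants commute with $T$ and do not change $\nabla a$) eliminates that term outright and lets you take $\epsilon(a)=1$ --- a slightly cleaner handling of the same difficulty.
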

\begin{proof}
Let $\varphi \in \calS(\R^n)$ be such that
$\varphi(0)=1$,
$\int_{\R^n}\varphi(x)\, dx=1$
and $\mathrm{supp}\, \varphi \subset \{x \in \R^n : |x| \le 1\}$.
If we set
$a_{\epsilon}(x)=\varphi(\epsilon x)(\varphi_{\epsilon}*a)(x)$, then
$\{a_{\epsilon}\}_{0<\epsilon<\epsilon(a)} \subset \calS(\R^n)$
satisfies (1) and (2),
where $\varphi_{\epsilon}(x)=\epsilon^{-n}\varphi(x/\epsilon)$
and $\epsilon(a)$ will be chosen in the below.
\par
We first consider (2).
Since $|a(x)-a(y)| \le \|\nabla a\|_{L^\infty}|x-y|$
for all $x,y \in \R^n$,
we see that
\begin{align*}
&|\partial_i(a_{\epsilon}(x))|
\le \epsilon|(\partial_i\varphi)(\epsilon x)\,
\varphi_{\epsilon}*a(x)|+|\varphi(\epsilon x)\,
\varphi_{\epsilon}*(\partial_ia)(x)|
\\
&\le \epsilon
|(\partial_i\varphi)(\epsilon x)\, (\varphi_{\epsilon}*a(x)-a(0))|
+\epsilon|(\partial_i\varphi)(\epsilon x)\, a(0)|
+\|\varphi\|_{L^1}\|\varphi\|_{L^\infty}\|\nabla a\|_{L^\infty}
\\
&\le \epsilon|(\nabla \varphi)(\epsilon x)|
\int_{\R^n}\|\nabla a\|_{L^\infty}
(1+|x|)(1+\epsilon|y|)|\varphi(y)|\, dy
\\
&\qquad +\epsilon|a(0)|\|\nabla \varphi\|_{L^\infty}
+\|\varphi\|_{L^1}\|\varphi\|_{L^\infty}\|\nabla a\|_{L^\infty}
\\
&\le C_{\varphi}^1C_{\varphi}^2\|\nabla a\|_{L^{\infty}}
+\epsilon|a(0)|\|\nabla \varphi\|_{L^\infty}
+\|\varphi\|_{L^1}\|\varphi\|_{L^\infty}\|\nabla a\|_{L^\infty}
\end{align*}
for all $0<\epsilon<1$,
where $C_{\varphi}^1=\sup_{x \in \R^n}(1+|x|)|\nabla \varphi(x)|$
and $C_{\varphi}^2=\int_{\R^n}(1+|y|)|\varphi(y)|\, dy$.
Hence, we obtain (2) with
$\epsilon(a)=\min\{\|\nabla a\|_{L^\infty}/|a(0)|,1\}$
if $a(0) \neq 0$,
and $\epsilon(a)=1$ if $a(0)=0$.
\par
We next consider (1).
Since $a$ is continuous
and $|a(x)| \le C(1+|x|)$ for all $x \in \R^n$,
we see that $\lim_{\epsilon \to 0}a_{\epsilon}(x)=a(x)$
for all $x \in \R^n$, and
$|a_{\epsilon}(x)| \le C\|\varphi\|_{L^{\infty}}C_{\varphi}^2(1+|x|)$
for all $0<\epsilon<\epsilon(a)$ and $x \in \R^n$.
Hence,
by the Lebesgue dominated convergence theorem,
we have that
$\lim_{\epsilon \to 0}\langle a_{\epsilon}Tf,g \rangle
=\langle aTf,g \rangle$ for all $f,g \in \calS(\R^n)$,
and $a_{\epsilon}f \to af$ in $L^2(\R^n)$ as $\epsilon \to 0$
for all $f \in \calS(\R^n)$,
and consequently
$T(a_{\epsilon}f) \to T(af)$ in $L^2(\R^n)$ as $\epsilon \to 0$
for all $f \in \calS(\R^n)$.
The proof is complete.
\end{proof}
\begin{lem}\label{4.2}
Let $\sigma(x,\xi) \in \calS(\R^n\times\R^n)$ be such that
$\mathrm{supp}\, \widehat{\sigma_x} \subset \Omega$,
where $\sigma_x(\xi)=\sigma(x,\xi)$,
$\widehat{\sigma_x}(\eta)=\calF_2\sigma(x,\eta)$
and $\Omega$ is a compact subset of $\R^n$ independent of $x$.
Then there exists a constant $C>0$ such that
\[
|\sigma(X,D)f(x)|
\le C|\Omega|^{1/2}\|\sigma(x,\cdot)\|_{L^2}\|f\|_{L^{\infty}}
\]
for all $f \in \calS(\R^n)$,
where $C$ is independent of $\sigma$ and $\Omega$.
\end{lem}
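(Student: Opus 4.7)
The plan is to mimic the proof of Lemma \ref{3.3}, replacing the $L^2$ bound on $f$ by the trivial pointwise $L^\infty$ bound, since we now want a pointwise estimate on $\sigma(X,D)f(x)$ instead of an $L^2$ estimate.

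The key step is to rewrite $\sigma(X,D)f$ using the spectral support assumption on $\sigma_x$. By Fourier inversion in the $\xi$ variable,
\[
\sigma(x,\xi)=\frac{1}{(2\pi)^n}\int_{\Omega} e^{i\xi\cdot\eta}\,\widehat{\sigma_x}(\eta)\,d\eta,
\]
where the integral is restricted to $\Omega$ because $\mathrm{supp}\,\widehat{\sigma_x}\subset\Omega$ uniformly in $x$. Substituting this into the definition of $\sigma(X,D)f$ and interchanging the order of integration (justified since $\sigma\in\calS$ and $f\in\calS$), the inner $\xi$-integral collapses via the Fourier inversion formula for $f$, yielding the representation
\[
\sigma(X,D)f(x)=\frac{1}{(2\pi)^n}\int_{\Omega}\widehat{\sigma_x}(\eta)\,f(x+\eta)\,d\eta.
\]

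From here the estimate is a two-line application of standard inequalities. Pulling out $\|f\|_{L^\infty}$ and then applying the Cauchy--Schwarz inequality on $\Omega$ (which has finite measure) gives
\[
|\sigma(X,D)f(x)|\le\frac{1}{(2\pi)^n}\,\|f\|_{L^\infty}\int_{\Omega}|\widehat{\sigma_x}(\eta)|\,d\eta
\le \frac{|\Omega|^{1/2}}{(2\pi)^n}\,\|\widehat{\sigma_x}\|_{L^2}\,\|f\|_{L^\infty}.
\]
Finally, Plancherel's theorem turns $\|\widehat{\sigma_x}\|_{L^2}$ into a constant times $\|\sigma(x,\cdot)\|_{L^2}$, which is the claimed inequality.

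There is no real obstacle here. The only point worth checking carefully is the justification for swapping the order of integration in passing from the Fourier-inversion representation of $\sigma$ to the convolution-type formula for $\sigma(X,D)f$, but this is immediate from the rapid decay of both $\sigma$ and $\widehat{f}$ since $\sigma\in\calS(\R^n\times\R^n)$ and $f\in\calS(\R^n)$. The content of the lemma is really the representation formula, which is the pointwise analog of the $L^2\to L^2$ Young-type bound used in Lemma \ref{3.3}.
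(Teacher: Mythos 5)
Your proof is correct and follows essentially the same route as the paper: both derive the representation $\sigma(X,D)f(x)=(2\pi)^{-n}\int_{\Omega}\widehat{\sigma_x}(\eta)\,f(x+\eta)\,d\eta$ and then apply the Cauchy--Schwarz inequality on $\Omega$ together with Plancherel's theorem. The only (immaterial) difference is that you obtain the formula by Fourier-inverting $\sigma_x$ first, whereas the paper substitutes the integral formula for $\widehat{f}$ and recognizes the inner integral as $\widehat{\sigma_x}$.
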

\begin{proof}
Since
\begin{align*}
\sigma(X,D)f(x)
&=\frac{1}{(2\pi)^n}\int_{\R^n}
\left( \int_{\R^n}
e^{-iy\cdot\xi}\, \sigma(x,\xi)\, d\xi \right)
f(x+y)\, dy
\\
&=\frac{1}{(2\pi)^n}\int_{\R^n}
\widehat{\sigma_x}(y)\, f(x+y)\, dy
=\frac{1}{(2\pi)^n}\int_{\Omega}
\widehat{\sigma_x}(y)\, f(x+y)\, dy,
\end{align*}
we have by Schwartz's inequality and Plancherel's theorem
\[
|\sigma(X,D)f(x)|
\le C_n
|\Omega|^{1/2}\|\widehat{\sigma_x}\|_{L^2}\|f\|_{L^{\infty}}
=C_n|\Omega|^{1/2}\|\sigma_x\|_{L^2}\|f\|_{L^{\infty}}.
\]
The proof is complete.
\end{proof}
\begin{lem}\label{4.3}
Let $0 \le \alpha \le 1$.
Then there exists an $\alpha$-covering $\calQ$ of $\R^n$
with a corresponding BAPU
$\{\psi_Q\}_{Q \in \calQ} \subset \calS(\R^n)$ satisfying
\[
\|\partial^{\beta}(\calF^{-1}\psi_Q)\|_{L^1}
\le C_{\beta}\langle \xi_Q \rangle^{|\beta|}
\qquad \text{for all $\xi_Q \in Q$ and $Q \in \calQ$},
\]
where $\beta \in \Z_+^n=\{0,1,\dots\}^n$.
\end{lem}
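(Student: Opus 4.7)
\textbf{Proof plan for Lemma \ref{4.3}.}
The plan is to realize $\psi_Q$ as a scaled and translated copy of a single smooth bump that is uniformly controlled in $Q$, and to read the estimate off the scaling.

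First I would recall the explicit construction from \cite[Proposition A.1]{B-N}. Fix $\rho\in C_c^{\infty}(\R^n)$ with $\rho\ge 0$, $\rho\equiv 1$ on $B(0,1/2)$ and $\mathrm{supp}\,\rho\subset B(0,1)$. Choose centers $\{\xi_Q\}$ and radii $r_Q\asymp\langle\xi_Q\rangle^{\alpha}$ so that the balls $B(\xi_Q,r_Q/2)$ cover $\R^n$ while the balls $B(\xi_Q,r_Q)$ have bounded overlap. Set $\tilde\psi_Q(\xi)=\rho((\xi-\xi_Q)/r_Q)$, normalize $\psi_Q=\tilde\psi_Q/\sum_{Q'}\tilde\psi_{Q'}$, and take $Q=\mathrm{supp}\,\psi_Q$; this yields a BAPU subordinate to an $\alpha$-covering.

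The key observation is the factorization $\psi_Q(\xi)=\Phi_Q((\xi-\xi_Q)/r_Q)$ with
\[
\Phi_Q(\eta)=\frac{\rho(\eta)}{\sum_{Q'}\rho\bigl((\xi_Q+r_Q\eta-\xi_{Q'})/r_{Q'}\bigr)},
\]
so that $\calF^{-1}\psi_Q(x)=r_Q^n\,e^{ix\cdot\xi_Q}(\calF^{-1}\Phi_Q)(r_Q x)$. Applying Leibniz's rule to $\partial^{\beta}$, using $\partial_x^{\gamma}e^{ix\cdot\xi_Q}=(i\xi_Q)^{\gamma}e^{ix\cdot\xi_Q}$, and changing variables $y=r_Q x$ in the $L^1$ integral would give
\[
\|\partial^{\beta}(\calF^{-1}\psi_Q)\|_{L^1}
\le C_{\beta}\sum_{\gamma\le\beta}|\xi_Q|^{|\beta-\gamma|}\,r_Q^{|\gamma|}\,\|\calF^{-1}[\eta^{\gamma}\Phi_Q]\|_{L^1}.
\]
Granted that $\{\Phi_Q\}$ has common support in $B(0,1)$ and is uniformly bounded in every $C^k$-norm, a standard estimate for the inverse Fourier transform of such a function (integration by parts in the defining integral yields $|\calF^{-1}[\eta^{\gamma}\Phi_Q](x)|\le C_{\gamma,N}\langle x\rangle^{-N}$ for any $N$) gives $\|\calF^{-1}[\eta^{\gamma}\Phi_Q]\|_{L^1}\le C_{\gamma}$ uniformly in $Q$. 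Using $r_Q\le C\langle\xi_Q\rangle^{\alpha}\le C\langle\xi_Q\rangle$ and $|\xi_Q|\le\langle\xi_Q\rangle$, the bound $\|\partial^{\beta}(\calF^{-1}\psi_Q)\|_{L^1}\le C_{\beta}'\langle\xi_Q\rangle^{|\beta|}$ then follows.

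The main obstacle is establishing the uniform $C^k$-control on $\Phi_Q$. The denominator is bounded below by $1$ for $\eta\in B(0,1)$, because the smaller balls $B(\xi_{Q'},r_{Q'}/2)$ cover $\R^n$ and $\tilde\psi_{Q'}\equiv 1$ on each. Each derivative $\partial_{\eta}^{\mu}$ applied to a summand $\rho((\xi_Q+r_Q\eta-\xi_{Q'})/r_{Q'})$ produces a factor $(r_Q/r_{Q'})^{|\mu|}$, and for $\eta\in B(0,1)$ only those $Q'$ with $|\xi_{Q'}-\xi_Q|\le Cr_Q$ contribute. By Lemma \ref{2.1} and $|Q|\asymp\langle\xi_Q\rangle^{\alpha n}$ such $Q'$ satisfy $\langle\xi_{Q'}\rangle\asymp\langle\xi_Q\rangle$, hence $r_{Q'}\asymp r_Q$, so the ratios are uniformly bounded. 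The finite overlap bound $n_0'$ from Lemma \ref{2.1}(2) controls the number of contributing terms, and repeated application of Leibniz's and the quotient rule then yields the required uniform $C^k$-bounds on $\Phi_Q$, completing the plan.
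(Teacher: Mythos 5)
Your proposal is correct and follows essentially the same route as the paper: both use the Borup--Nielsen construction of normalized bumps $\psi_Q(\xi)=\Phi_Q((\xi-\xi_Q)/r_Q)$ with $r_Q\asymp\langle\xi_Q\rangle^{\alpha}$, rescale so that the $\calF^{-1}$ of the uniformly controlled family $\{\Phi_Q\}$ contributes only an absolute constant, and read off the factor $\langle\xi_Q\rangle^{|\beta|}$ from $|\xi_Q|^{|\beta-\gamma|}r_Q^{|\gamma|}\le C\langle\xi_Q\rangle^{|\beta|}$. The only difference is that the paper simply cites \cite[Proposition A.1]{B-N} for the uniform $C^k$-bounds on the rescaled bumps (and handles $\alpha=1$ separately via the dyadic BAPU), whereas you sketch a proof of that uniformity; your sketch is sound.
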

\begin{proof}
If $\alpha=1$ then Lemma \ref{4.3} is trivial,
since we can take $\{\varphi_j\}_{j \ge 0}$
as a BAPU corresponding to the $\alpha$-covering
$\{\{|\xi| \le 2\},
\{\{2^{j-1}\le |\xi| \le 2^{j+1}\}\}_{j\ge 1}\}$,
where $\{\varphi_j\}_{j \ge 0}$ is as in \eqref{(2.7)}.
\par
We consider the case $0\le\alpha<1$.
Let $B_k^r=B(|k|^{\alpha/(1-\alpha)}k,r|k|^{\alpha/(1-\alpha)})$
and $\Phi \in \calS(\R^n)$
be such that $\inf_{|\xi| \le r/2}|\Phi(\xi)|>0$
and $\mathrm{supp}\, \Phi \subset B(0,r)$,
where $k \in \Z^n\setminus\{0\}$ and $r$ is sufficiently large.
Set
\[
\psi_k(\xi)
=\frac{g_k(\xi)}{\sum_{n \in \Z^n\setminus\{0\}}g_n(\xi)}
\quad \text{and} \quad
g_k(\xi)=\Phi(|c_k|^{-\alpha}(\xi-c_k)),
\quad k \in \Z^n \setminus \{0\},
\]
where $c_k=|k|^{\alpha/(1-\alpha)}k$.
In the proof of \cite[Proposition A.1]{B-N}
(or \cite[Proposition 2.4]{B-N-2}),
Borup and Nielsen proved that
the pair of 
$\{B_k^r\}_{k \in \Z^n\setminus\{0\}}$ and
$\{\psi_k\}_{k \in \Z^n\setminus\{0\}}$
is an $\alpha$-covering of $\R^n$ with a corresponding BAPU,
and $|\partial^{\beta}\psi_k(\xi)|
\le C_{\beta}\langle \xi \rangle^{-|\beta|\alpha}$ and
$\|\partial^{\beta}\widetilde{\psi_k}\|_{L^1} \le C_{\beta}'$
for all $k \in \Z^n\setminus\{0\}$ and $\beta \in \Z_+^n$,
where $\widetilde{\psi_k}(\xi)=\psi(|c_k|^{\alpha}\xi+c_k)$.
Since
$\{B_k^r\}_{k \in \Z^n\setminus\{0\}}$
is an $\alpha$-covering of $\R^n$,
we have
$\langle c_k \rangle \asymp
\langle \xi_{B_k^r} \rangle$
for all $\xi_{B_k^r} \in B_k^r$
and $k \in \Z^n\setminus\{0\}$.
Noting $\mathrm{supp}\, \widetilde{\psi_k} \subset B(0,r)$,
we see that
\begin{align*}
&\|\partial^{\beta}(\calF^{-1}\psi_k)\|_{L^1}
=\int_{\R^n}\left|\frac{1}{(2\pi)^n}\int_{\R^n}
e^{ix\cdot\xi}\, \xi^{\beta}\, \psi_k(\xi)\, d\xi
\right| dx
\\
&=\int_{\R^n}\left|\frac{1}{(2\pi)^n}\int_{\R^n}
e^{ix\cdot\xi}\, (|c_k|^{\alpha}\xi+c_k)^{\beta}\,
\widetilde{\psi_k}(\xi)\, d\xi
\right| dx
\\
&\le C_{\beta}\langle c_k \rangle^{|\beta|}
\left(\sum_{|\gamma|\le n+1}
\|\partial^{\gamma}\widetilde{\psi_k}\|_{L^1}\right)
\int_{\R^n}\langle x \rangle^{-n-1}\, dx
\le C_{\beta,n}\langle \xi_{B_k^r} \rangle^{|\beta|}
\end{align*}
for all $\xi_{B_k^r} \in B_k^r$,
$k \in \Z^n\setminus\{0\}$
and $\beta \in \Z_+^n$.
The proof is complete.
\end{proof}
We are now ready to prove Theorem \ref{1.2}.

\medskip
\noindent
{\it Proof of Theorem \ref{1.2}.}
Let
$\sigma \in
M_{(\alpha n/2,\alpha n+1),(\alpha,\alpha)}^{(\infty,\infty),(1,1)}
(\R^n\times\R^n)$ and
$a$ be a Lipschitz function on $\R^n$.
Then,
by Theorem \ref{1.1},
we see that $\sigma(X,D)$ is  bounded on $L^2(\R^n)$.
Since $[\sigma(X,D),a]=0$ if $a$ is a constant function,
we may assume $\|\nabla a\|_{L^{\infty}} \neq 0$.
Hence,
by Lemmas \ref{3.4} and {4.1},
we have
\[
\langle [\sigma(X,D),a]f,g \rangle
=\lim_{\epsilon \to 0}
\langle [\sigma(X,D),a_{\epsilon}]f,g \rangle
=\lim_{\epsilon \to 0}
\left(\lim_{\epsilon' \to 0}
\langle [\sigma_{\epsilon'}(X,D),a_{\epsilon}]f,g \rangle\right)
\]
for all $f,g \in \calS(\R^n)$, where
$\{\sigma_{\epsilon'}\}_{0<\epsilon'<1}
\subset \calS(\R^n\times\R^n)$
and
$\{a_{\epsilon}\}_{0<\epsilon<\epsilon(a)}
\subset \calS(\R^n)$ are as in Lemmas \ref{3.4} and \ref{4.1}.
Hence, it is enough to prove Theorem \ref{1.2}
with $\sigma \in \calS(\R^n\times\R^n)$ and $a \in \calS(\R^n)$.
We note that
\begin{equation}\label{(4.2)}
[\sigma(X,D),a]f(x)
=C_n\int_{\R^n}e^{ix\cdot\xi}\left(
\int_{\R^n}e^{ix\cdot\eta}
\left(\sigma(x,\xi+\eta)-\sigma(x,\xi)\right)
\widehat{a}(\eta)\, d\eta \right) \widehat{f}(\xi)\, d\xi
\end{equation}
for all $f \in \calS(\R^n)$,
where $\sigma \in \calS(\R^n\times\R^n)$ and $a \in \calS(\R^n)$.
In fact,
\begin{align*}
\sigma(X,D)(af)(x)
&=\frac{1}{(2\pi)^n}\int_{\R^n}
e^{ix\cdot\eta}\, \sigma(x,\eta)\, \widehat{af}(\eta)\, d\eta
\\
&=\frac{1}{(2\pi)^n}\int_{\R^n}
e^{ix\cdot\eta}\, \sigma(x,\eta)
\left( \frac{1}{(2\pi)^n}
\int_{\R^n}\widehat{a}(\eta-\xi)\, \widehat{f}(\xi)\, d\xi
\right)d\eta
\end{align*}
and
\[
a(x)\sigma(X,D)f(x)
=\left(\frac{1}{(2\pi)^n}
\int_{\R^n}e^{ix\cdot\eta}\, \widehat{a}(\eta)\, d\eta\right)
\frac{1}{(2\pi)^n}
\int_{\R^n}e^{ix\cdot\xi}\, \sigma(x,\xi)\, \widehat{f}(\xi)\, d\xi.
\]
We decompose $\sigma$ and $a$ as follows:
\begin{equation}\label{(4.3)}
\sigma(x,\xi)=\sum_{Q,Q' \in \calQ}\sigma_{Q,Q'}(x,\xi)
\quad \text{and} \quad
a(x)=\sum_{j=0}^{\infty}\varphi_j(D)a(x),
\end{equation}
where $\sigma_{Q,Q'}(x,\xi)=\psi_Q(D_x)\psi_{Q'}(D_{\xi})\sigma(x,\xi)$,
$\calQ$ is an $\alpha$-covering of $\R^n$
with a corresponding BAPU
$\{\psi_Q\}_{Q \in \calQ} \subset \calS(\R^n)$,
and $\{\varphi_j\}_{j \ge 0}$ is as in \eqref{(2.7)}.
Then, by the decomposition \eqref{(4.3)},
\begin{equation}\label{(4.4)}
[\sigma(X,D),a]=
\sum_{Q,Q' \in \calQ}[\sigma_{Q,Q'}(X,D),\varphi_0(D)a]
+\sum_{j=1}^{\infty}
[\sigma(X,D),\varphi_j(D)a].
\end{equation}

We consider the first sum of the right-hand side of \eqref{(4.4)}.
By \eqref{(4.2)} and Taylor's formula,
we have
\begin{align*}
&[\sigma_{Q,Q'}(X,D),\varphi_0(D)a]f(x)
\\
&=C_n\int_{\R^n}e^{ix\cdot\xi}\left\{
\int_{\R^n}e^{ix\cdot\eta}
\left(\sum_{k=1}^n \eta_k \int_0^1 \partial_{\xi_k}
\sigma_{Q,Q'}(x,\xi+t\eta) dt \right)
\varphi_0(\eta)\, \widehat{a}(\eta)
d\eta \right\} \widehat{f}(\xi) d\xi
\\
&=C_n\sum_{k=1}^n\int_0^1\left\{
\int_{\R^n}e^{ix\cdot\xi}\left(
\int_{\R^n}e^{ix\cdot\eta}\,
\partial_{\xi_k}\sigma_{Q,Q'}(x,\xi+t\eta)\,
\varphi_0(\eta)\,
\widehat{\partial_k a}(\eta) d\eta \right) \widehat{f}(\xi) d\xi
\right\}dt,
\end{align*}
where $\eta=(\eta_1,\dots,\eta_n) \in \R^n$.
Hence,
by Theorem \ref{1.1},
\begin{equation}\label{(4.5)}
\begin{split}
&\|[\sigma_{Q,Q'}(X,D),\varphi_0(D)a]f\|_{L^2}
\\
&\le C\|f\|_{L^2}\sum_{k=1}^n \int_0^1
\\
&\qquad \times
\left\| \int_{\R^n}e^{ix\cdot\eta}\,
\partial_{\xi_k}\sigma_{Q,Q'}(x,\xi+t\eta)\,
\varphi_0(\eta)\,
\widehat{\partial_k a}(\eta)\, d\eta
\right\|_{M_{(\alpha n/2,\alpha n/2),
(\alpha,\alpha)}^{(\infty,\infty),(1,1)}} dt.
\end{split}
\end{equation}
Note that
$\partial_{\xi_k}\sigma_{Q,Q'}(x,\xi+t\eta)
\in \calS(\R_x^n\times\R_\xi^n)$.
Since
\begin{align*}
&\calF_{x \to y}
\left[
\int_{\R^n}e^{ix\cdot\eta}\,
\partial_{\xi_k}\sigma_{Q,Q'}(x,\xi+t\eta)\,
\varphi_0(\eta)\,
\widehat{\partial_k a}(\eta)\, d\eta
\right]
\subset \{y \in \R^n : y \in Q+\overline{B(0,2)}\},
\\
&\calF_{\xi \to \zeta}
\left[
\int_{\R^n}e^{ix\cdot\eta}\,
\partial_{\xi_k}\sigma_{Q,Q'}(x,\xi+t\eta)\,
\varphi_0(\eta)\,
\widehat{\partial_k a}(\eta)\, d\eta
\right]
\subset \{\zeta \in \R^n : \zeta \in Q'\}
\end{align*}
and $\sup_{Q \in \calQ}\|\calF^{-1}\psi_Q\|_{L^1}<\infty$,
we have by Lemma \ref{2.1}
\begin{equation}\label{(4.6)}
\begin{split}
&\left\| \int_{\R^n}e^{ix\cdot\eta}\,
\partial_{\xi_k}\sigma_{Q,Q'}(x,\xi+t\eta)\,
\varphi_0(\eta)\,
\widehat{\partial_k a}(\eta)\, d\eta
\right\|_{M_{(\alpha n/2,\alpha n/2),
(\alpha,\alpha)}^{(\infty,\infty),(1,1)}}
\\
&=\sum_{\scriptstyle \widetilde{Q}\cap(Q+\overline{B(0,2)}) \neq \emptyset
\scriptstyle \atop \widetilde{Q} \in \calQ}
\sum_{\scriptstyle \widetilde{Q}' \cap Q' \neq \emptyset
\scriptstyle \atop \widetilde{Q}' \in \calQ}
\langle x_{\widetilde{Q}} \rangle^{\alpha n/2}
\langle \xi_{\widetilde{Q}'} \rangle^{\alpha n/2}
\\
&\qquad \times
\left\| \psi_{\widetilde{Q}}(D_x)
\psi_{\widetilde{Q}'}(D_\xi)
\int_{\R^n}e^{ix\cdot\eta}\,
\partial_{\xi_k}\sigma_{Q,Q'}(x,\xi+t\eta)\,
\varphi_0(\eta)\,
\widehat{\partial_k a}(\eta)\, d\eta
\right\|_{L^{\infty}(\R^n\times\R^n)}
\\
&\le C
\langle x_{Q} \rangle^{\alpha n/2}
\langle \xi_{Q'} \rangle^{\alpha n/2}
\left\|\int_{\R^n}e^{ix\cdot\eta}\,
\partial_{\xi_k}\sigma_{Q,Q'}(x,\xi+t\eta)\,
\varphi_0(\eta)\,
\widehat{\partial_k a}(\eta)\, d\eta
\right\|_{L^{\infty}(\R^n\times\R^n)}.
\end{split}
\end{equation}
Let $\chi \in \calS(\R^n)$ be such that
$|\chi| \ge 1$ on $\{|\xi| \le 4\}$
and $\mathrm{supp}\, \widehat{\chi} \subset \{|x|<1\}$
(for the existence of such a function,
see the proof of \cite[Theorem 2.6]{Frazier-Jawerth}).
Since $\varphi_0=\varphi_0\, \chi/\chi=\chi\, (\varphi_0/\chi)$,
we can write $\varphi_0=\chi\, \Phi$,
where $\Phi=\varphi_0/\chi \in \calS(\R^n)$.
Then
\begin{equation}\label{(4.7)}
\begin{split}
&\int_{\R^n}e^{ix\cdot\eta}\,
\partial_{\xi_k}\sigma_{Q,Q'}(x,\xi+t\eta)\,
\varphi_0(\eta)\,
\widehat{\partial_k a}(\eta)\, d\eta
\\
&=\int_{\R^n}e^{ix\cdot\eta}\,
\partial_{\xi_k}\sigma_{Q,Q'}(x,\xi+t\eta)\,
\chi(\eta)\, \Phi(\eta)\,
\widehat{\partial_k a}(\eta)\, d\eta
=\tau_{Q,Q'}^{k,t,\xi}(X,D)(\Phi(D)(\partial_k a))(x),
\end{split}
\end{equation}
where
$\tau_{Q,Q'}^{k,t,\xi}(x,\eta)
=\partial_{\xi_k}\sigma_{Q,Q'}(x,\xi+t\eta)\, \chi(\eta)$.
Since
\[
\calF_{\eta \to \zeta}
\left[\partial_{\xi_k}\sigma_{Q,Q'}(x,\xi+t\eta)\right]
=t^{-n}(i\zeta_k/t)\, e^{i\xi\cdot\zeta/t}\,
\psi_{Q'}(\zeta/t)\, \calF_2[\psi_Q(D_x)\sigma](x,\zeta/t),
\]
we have
\begin{equation}\label{(4.8)}
\mathrm{supp}\,
\calF[(\tau_{Q,Q'}^{k,t,\xi})_x]
\subset \{\zeta\in \R^n : \zeta \in tQ'+B(0,1)\},
\end{equation}
where
$(\tau_{Q,Q'}^{k,t,\xi})_x(\eta)=\tau_{Q,Q'}^{k,t,\xi}(x,\eta)$
and $tQ'=\{t\zeta : \zeta \in Q'\}$.
On the other hand,
by \eqref{(2.5)},
\eqref{(2.8)} and Lemma \ref{4.3},
we see that
\begin{equation}\label{(4.9)}
\begin{split}
&\|\partial_{\xi_k}\sigma_{Q,Q'}\|_{L^{\infty}(\R^n\times\R^n)}
\le \sum_{\widetilde{Q}' \in \calQ}
\|\partial_{\xi_k}
(\psi_{\widetilde{Q}'}(D_\xi)\sigma_{Q,Q'})
\|_{L^{\infty}(\R^n\times\R^n)}
\\
&=\sum_{\widetilde{Q}' \cap Q' \neq \emptyset}
\sup_{x \in \R^n}
\left\|[\partial_{\xi_k}(\calF^{-1}\psi_{\widetilde{Q}'})]*
\sigma_{Q,Q'}(x,\cdot)\right\|_{L^{\infty}}
\\
&\le \sum_{\widetilde{Q}' \cap Q' \neq \emptyset}
\sup_{x \in \R^n}
\|\partial_{\xi_k}(\calF^{-1}\psi_{\widetilde{Q}'})\|_{L^1}
\|\sigma_{Q,Q'}(x,\cdot)\|_{L^{\infty}}
\\
&\le C\sum_{\widetilde{Q}' \cap Q' \neq \emptyset}
\langle \xi_{\widetilde{Q}'} \rangle
\|\sigma_{Q,Q'}\|_{L^{\infty}(\R^n\times\R^n)}
\le Cn_0\langle \xi_{Q'} \rangle
\|\sigma_{Q,Q'}\|_{L^{\infty}(\R^n\times\R^n)}.
\end{split}
\end{equation}
We note that
$\tau_{Q,Q'}^{k,t,\xi}(x,\eta)
\in \calS(\R_x^n\times\R_\eta^n)$
for every $1\le k \le n$,
$0<t<1$ and $\xi \in \R^n$,
since $\sigma \in \calS(\R^n\times\R^n)$.
Thus, by \eqref{(2.3)},
\eqref{(4.7)}, \eqref{(4.8)}, \eqref{(4.9)}
and Lemma \ref{4.2},
we obtain that
\begin{equation}\label{(4.10)}
\begin{split}
&\sup_{x,\xi \in \R^n}\left| \int_{\R^n}e^{ix\cdot\eta}\,
\partial_{\xi_k}\sigma_{Q,Q'}(x,\xi+t\eta)\,
\varphi_0(\eta)\,
\widehat{\partial_k a}(\eta)\, d\eta \right|
\\
&\le C|tQ'+B(0,1)|^{1/2}
\left(\sup_{x,\xi \in \R^n}
\|\tau_{Q,Q'}^{k,t,\xi}(x,\cdot)\|_{L^2}\right)
\|\Phi(D)(\partial_k a)\|_{L^{\infty}}
\\
&\le C|Q'|^{1/2}
\|\partial_{\xi_k}\sigma_{Q,Q'}\|_{L^{\infty}(\R^n\times\R^n)}
\|\chi\|_{L^2}\|\Phi\|_{L^1}
\|\partial_k a\|_{L^{\infty}}
\\
&\le C\langle \xi_{Q'} \rangle^{\alpha n/2+1}
\|\sigma_{Q,Q'}\|_{L^{\infty}(\R^n\times\R^n)}
\|\nabla a\|_{L^{\infty}}
\end{split}
\end{equation}
for all $0<t<1$.
Combining \eqref{(4.5)}, \eqref{(4.6)} and \eqref{(4.10)},
we have
\begin{align*}
&\|[\sigma(X,D),\varphi_0(D)a]f\|_{L^2}
\le \sum_{Q,Q' \in \calQ}
\|[\sigma_{Q,Q'}(X,D),\varphi_0(D)a]f\|_{L^2}
\\
&\le C\|\nabla a\|_{L^{\infty}}
\left(\sum_{Q,Q' \in \calQ}
\langle x_{Q} \rangle^{\alpha n/2}
\langle \xi_{Q'} \rangle^{\alpha n+1}
\|\sigma_{Q,Q'}\|_{L^{\infty}(\R^n\times\R^n)}\right)
\|f\|_{L^2}
\\
&= C\|\nabla a\|_{L^{\infty}}
\|\sigma
\|_{M_{(\alpha n/2,\alpha n+1),
(\alpha,\alpha)}^{(\infty,\infty),(1,1)}}
\|f\|_{L^2}.
\end{align*}
\par
We next consider the second sum of the right-hand side \eqref{(4.4)}.
Since
\[
\varphi_j(D)a(x)
=\int_{\R^n}
2^{jn}(\calF^{-1}\varphi)(2^{j}(x-y))\, (a(y)-a(x))\, dx
\]
and $a$ is a Lipschitz function,
we have
$\|\varphi_j(D)a\|_{L^\infty} \le C2^{-j}\|\nabla a\|_{L^\infty}$
for all $j \ge 1$.
Hence, by Theorem \ref{1.1},
we see that
\begin{align*}
&\sum_{j=1}^{\infty}
\|[\sigma(X,D), \varphi_j(D)a]f\|_{L^2}
\\
&\le \sum_{j=1}^{\infty}
\left( \|\sigma(X,D)(\varphi_j(D)a)f\|_{L^2}
+\|(\varphi_j(D)a)\sigma(X,D)f\|_{L^2}\right)
\\
&\le C\sum_{j=1}^{\infty}2^{-j}
\|\nabla a\|_{L^\infty}
\|\sigma\|_{M_{(\alpha n/2,\alpha n/2),
(\alpha,\alpha)}^{(\infty,\infty),(1,1)}}\|f\|_{L^2}
\\
&\le C\|\nabla a\|_{L^\infty}
\|\sigma\|_{M_{(\alpha n/2,\alpha n+1),
(\alpha,\alpha)}^{(\infty,\infty),(1,1)}}
\|f\|_{L^2}.
\end{align*}
The proof is complete.

\appendix
\section{The inclusion between Besov and modulation spaces}\label{appendix}
Let $1 \le p,q \le \infty$ and $p'$
be the conjugate exponent of $p$
(that is, $1/p+1/p'=1$).
In \cite[Theorem 3.1]{Toft}, Toft proved the inclusions
\[
B_{n\nu_1(p,q)}^{p,q}(\R^n) \hookrightarrow M^{p,q}(\R^n)
\hookrightarrow B_{n\nu_2(p,q)}^{p,q}(\R^n),
\]
where
\begin{align*}
&\nu_1(p,q)
=\max\{0,1/q-\min(1/p,1/p')\},
\\
&\nu_2(p,q)
=\min\{0,1/q-\max(1/p,1/p')\}.
\end{align*}
Due to Sugimoto-Tomita \cite[Theorem 1.2]{Sugimoto-Tomita},
the optimality of the inclusion relation
between Besov and modulation spaces
is described in the following way:
\begin{thm}
Let $1\le p,q \le \infty$ and $s \in \R$.
Then the following are true:
\begin{enumerate}
\item[{\rm (1)}]
If $B_s^{p,q}(\R^n) \hookrightarrow M^{p,q}(\R^n)$,
then $s \ge n \nu_1(p,q)$.
\item[{\rm (2)}]
If $M^{p,q}(\R^n) \hookrightarrow B_s^{p,q}(\R^n)$,
then $s \le n \nu_2(p,q)$.
\end{enumerate}
\end{thm}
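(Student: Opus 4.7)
The plan is to prove both parts by substituting explicit test-function families into the assumed inclusions and extracting necessary conditions on $s$. Both parts follow from the same families, simply by reversing the direction of the inequality. Throughout, fix $\phi\in\calS(\R^n)$ with $\mathrm{supp}\,\widehat\phi\subset B(0,1/4)$ and $\widehat\phi(0)=1$. I would first handle the ``trivial'' components $s\ge 0$ in (1) and $s\le 0$ in (2) using the modulated bump $f_{\xi_0}(x)=e^{i\xi_0\cdot x}\phi(x)$. For large $|\xi_0|$, $\widehat{f_{\xi_0}}=\widehat\phi(\cdot-\xi_0)$ sits inside a single unit modulation box, so $\|f_{\xi_0}\|_{M^{p,q}}\asymp\|\phi\|_{L^p}$ is bounded uniformly in $\xi_0$. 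On the other hand, its Fourier content lies in the dyadic annulus of radius $\asymp|\xi_0|$, so $\|f_{\xi_0}\|_{B^{p,q}_s}\asymp\langle\xi_0\rangle^s\|\phi\|_{L^p}$. Applying the hypothesized inclusion to $f_{\xi_0}$ and sending $|\xi_0|\to\infty$ yields the two sign conditions.

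For the sharper bounds, assume first that $p\le 2$, so $\min(1/p,1/p')=1/p'$ and $\max(1/p,1/p')=1/p$; the case $p\ge 2$ is handled by Fourier-dualizing the construction below, using the norm-equivalence $\calF(M^{p,q})\simeq M^{q,p}$ to exchange $p$ with $p'$ in the computations. Define for $N\ge 1$ the Dirichlet-type family
\[
f_N(x)=\phi(x)\sum_{k\in\{0,\ldots,N-1\}^n}e^{2\pi ik\cdot x},
\]
whose Fourier transform $\sum_k\widehat\phi(\cdot-2\pi k)$ consists of $N^n$ essentially disjoint localized pieces, all contained in a single dyadic annulus of radius $\asymp N$. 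I would establish the asymptotics
\[
\|f_N\|_{M^{p,q}}\asymp N^{n/q},\qquad
\|f_N\|_{L^p}\asymp N^{n/p'},\qquad
\|f_N\|_{B^{p,q}_s}\asymp N^{s+n/p'},
\]
where the first comes from the $N^n$ active modulation boxes each carrying comparable $L^p$-mass, the second from the sharp $L^p$-estimate on the tensor-product Dirichlet kernel, and the third from the single-annulus frequency support combined with the second. Substituting $f_N$ into each inclusion and letting $N\to\infty$ gives $s\ge n(1/q-1/p')$ in (1) and $s\le n(1/q-1/p')$ in (2); these match $n\nu_1(p,q)$ and $n\nu_2(p,q)$ respectively in the regime $p\le 2$, while the analogous Fourier-dual arguments cover $p\ge 2$.

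The main obstacle is the careful verification of the three norm asymptotics above. In particular, the $L^p$-bound on the tensor-product Dirichlet kernel must be tracked uniformly in $N$, with special attention at the endpoints $p=1$ (where a logarithmic loss appears) and $p=\infty$; and the modulation-norm estimate $\|f_N\|_{M^{p,q}}\asymp N^{n/q}$ relies on showing that no destructive interference reduces the $L^p$-mass carried by each active modulation box. A secondary difficulty is the clean handling of the Fourier-dual construction for $p\ge 2$: one must track how $\calF$ interacts with the Littlewood--Paley decomposition underlying the Besov norm, whose asymmetric behavior under Fourier transform (in contrast to the clean $\calF(M^{p,q})\simeq M^{q,p}$ identity for modulation spaces) makes the bookkeeping delicate.
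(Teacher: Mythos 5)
There is a genuine gap here, and it is structural rather than technical. Your plan rests on the claim that ``both parts follow from the same families, simply by reversing the direction of the inequality.'' But a single family $f_N$ with two-sided asymptotics $\|f_N\|_{M^{p,q}}\asymp N^{a}$ and $\|f_N\|_{B^{p,q}_s}\asymp N^{s+b}$ can only ever produce the single threshold $s\ge n(a-b)$ from (1) and $s\le n(a-b)$ from (2), whereas the theorem asserts two \emph{different} thresholds: $\nu_1$ involves $\min(1/p,1/p')$ while $\nu_2$ involves $\max(1/p,1/p')$, and these disagree for every $p\neq 2$. Concretely, inside your own regime $p\le 2$ take $p=1$, $q=\infty$: then $n\nu_1=0$ but $n\nu_2=-n$, while your family gives $\|f_N\|_{M^{1,\infty}}\asymp 1$ and $\|f_N\|_{B^{1,\infty}_s}\gtrsim N^{s}$ for $s>0$ (up to the logarithms you flag at $p=1$), so part (2) yields only $s\le 0$ --- far from the required $s\le -n$. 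The same failure is visible in the case the paper actually needs, $p=\infty$, $q=1$, where the sharp inclusions are $B^{\infty,1}_n\hookrightarrow M^{\infty,1}\hookrightarrow B^{\infty,1}_0$: no single test family can witness both endpoints $n$ and $0$. Your Dirichlet family is a reasonable extremizer for part (1) when $p\le 2$, but part (2) requires a genuinely different family; in \cite{Sugimoto-Tomita} the asymmetry is produced by the dilation $f\mapsto f(\lambda\,\cdot)$, whose sharp growth rate on $M^{p,q}$ as $\lambda\to\infty$ differs from that as $\lambda\to 0$, and it is precisely this asymmetry that makes $\nu_1\neq\nu_2$.

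Two further points. First, the identity $\calF(M^{p,q})\simeq M^{q,p}$ that you invoke to reduce $p\ge 2$ to $p\le 2$ is false for $p\neq q$: the Fourier image of $M^{p,q}$ is a Wiener amalgam space in which the \emph{order} of the mixed norm is exchanged along with the exponents, and this does not coincide with $M^{q,p}$; so that reduction also needs to be reworked (e.g.\ by computing the norms of the dual family directly). Second, for calibration: the paper does not reprove this theorem by test functions at all --- it quotes \cite[Theorem 1.2]{Sugimoto-Tomita} for $1\le p,q<\infty$ and handles the endpoint cases $p=\infty$ or $q=\infty$ of part (2) by interpolating a hypothetical endpoint embedding with $M^{2,2}=B^{2,2}_0$ to contradict the known non-endpoint case. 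A self-contained argument along your lines would need either the dilation estimates of \cite{Sugimoto-Tomita} or a second, genuinely different extremal family adapted to part (2).
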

In particular, we have the best inclusions
\[
B_n^{\infty,1}(\R^n) \hookrightarrow M^{\infty,1}(\R^n)
\hookrightarrow B_0^{\infty,1}(\R^n).
\]
Hence,
we see that
$B_{n/2}^{\infty,1}(\R^n)$ and $M^{\infty,1}(\R^n)$
have no inclusion relation with each others.
We remark that 
the statement (2) was shown in a restricted
case $1\le p,q < \infty$ in \cite{Sugimoto-Tomita},
but it is also true for the endpoint
$p=\infty$ or $q=\infty$.
For example, if we assume that
$M^{\infty,q}(\R^n) \hookrightarrow B_s^{\infty,q}(\R^n)$
with $s > n \nu_2(\infty,q)$, then we have
$M^{p,\widetilde{q}}(\R^n) \hookrightarrow B_s^{p,\widetilde{q}}(\R^n)$
($2<p<\infty$)
with $s > n \nu_2(p,\widetilde{q})$ 
by interpolating it with the fact $M^{2,2}=B^{2,2}_0$,
where $1<\widetilde{q}<\infty$ is a number determined by $p$ and $q$.
This contradicts to (2) with $1 \le p,q<\infty$.

\end{document}